\newcommand\blfootnote[1]{%
	\begingroup
	\renewcommand\thefootnote{}\footnote{#1}%
	\addtocounter{footnote}{-1}%
	\endgroup}
\theoremstyle{plain}
\newtheorem{theorem}{Theorem}[section]
\newtheorem{proposition}[theorem]{Proposition}
\newtheorem{lemma}[theorem]{Lemma}
\newtheorem{corollary}[theorem]{Corollary}
\theoremstyle{definition}
\newtheorem{definition}[theorem]{Definition}
\newtheorem{remark}[theorem]{Remark}
\newtheorem{question}[theorem]{Question}
\newcommand{\NN}{\mathbb{N}}
\newcommand{\RR}{\mathbb{R}}
\newcommand{\CC}{\mathbb{C}}
\newcommand{\Ac}{\mathcal{A}}
\newcommand{\AP}{\mathcal{AP}}
\newcommand{\Bc}{\mathcal{B}}
\newcommand{\Cc}{\mathcal{C}}
\newcommand{\Fc}{\mathcal{F}}
\newcommand{\Kc}{\mathcal{K}}
\newcommand{\Nc}{\mathcal{N}}
\newcommand{\Part}{\mathscr{P}}
\newcommand{\Tc}{\mathcal{T}}
\newcommand{\Vc}{\mathcal{V}}
\newcommand{\Yc}{\mathcal{Y}}
\newcommand{\Zc}{\mathcal{Z}}
\newcommand{\Rec}{\text{Rec}}
\newcommand{\cl}{\overline}
\newcommand{\eps}{\varepsilon}
\newcommand{\com}{\overline}
\newcommand{\fuz}{\hat}
\newcommand{\con}{\tilde}
\begin{document}
\begin{center}
	\begin{LARGE}
		{\bf Recurrence in collective dynamics: From the\\ hyperspace to fuzzy dynamical systems}
	\end{LARGE}
\end{center}

\begin{center}
	\begin{Large}
		Illych Alvarez, Antoni L\'opez-Mart\'inez \& Alfred Peris\blfootnote{\textbf{2020 Mathematics Subject Classification}: 37B02, 37B20, 54A40, 54B20.\\ \textbf{Key words and phrases}: Topological dynamics, Hyperspaces of compact sets, Spaces of fuzzy sets, Topological recurrence, Nonwandering systems, Multiple recurrence, Van der Waerden systems, Point-recurrence, Quasi-rigidity.\\ \textbf{Journal-ref}: Fuzzy Sets and Systems, Volume 506, article number 109296, (2025).\\ \textbf{DOI}: https://doi.org/10.1016/j.fss.2025.109296}
	\end{Large}
\end{center}


\begin{abstract}
	We study for a dynamical system $f:X\longrightarrow X$ some of the principal topological recurrence-kind properties with respect to the induced maps $\com{f}:\Kc(X)\longrightarrow\Kc(X)$, on the hyperspace of non-empty compact subsets of $X$, and $\fuz{f}:\Fc(X)\longrightarrow\Fc(X)$, on the space of normal fuzzy sets consisting of the upper-semicontinuous functions $u:X\longrightarrow [0,1]$ with compact support and such that $u^{-1}(\{1\})\neq\varnothing$. In particular, we characterize the properties of topological and multiple recurrence for the extended systems $(\Kc(X),\com{f})$ and $(\Fc(X),\fuz{f})$, which cover the cases of the so-called nonwandering and Van der Waerden systems. Special attention is given to the case where the underlying space is completely metrizable, for which we obtain some stronger point-recurrence equivalences.
\end{abstract}


\section{Introduction}

A {\em dynamical system} is a pair $(X,f)$ formed by continuous map $f:X\longrightarrow X$ acting on a topological space~$X$, which is usually called the {\em phase space}. In this paper we focus on the interplay between {\em individual dynamics} (i.e.\ the evolution of the system acting on points of the phase space) and the so-called {\em collective dynamics} (i.e.\ the evolution of the system acting on subsets of the phase space).

This kind of question goes back to the classical work of Bauer and Sigmund~\cite{BaSig1975} where, in the context of compact metric spaces, they studied the connection (for properties such as {\em distality}, {\em transitivity}, {\em mixing} and {\em weak-mixing}) between $(X,f)$ and the induced system $(\Kc(X),\com{f})$, on the hyperspace $\Kc(X)$ of non-empty compact subsets of $X$ endowed with the Vietoris topology. From then on, the dynamics on the hyperspace of compact sets (even over not necessarily metric spaces) have captured the attention of many researchers (see for instance \cite{Banks2005,BerPeRo2017,LiWaZha2006,Peris2005} and the literature cited therein).
 
In addition, when the underlying space $X$ is metrizable, another type of collective dynamics has recently been considered: the dynamical system $(X,f)$ induces a system $(\Fc(X),\fuz{f})$, on the space $\Fc(X)$ of {\em normal fuzzy sets}, where $\fuz{f}:\Fc(X)\longrightarrow\Fc(X)$ is called the {\em Zadeh extension} of~$f$.~In this context: Jard\'on et al.\ studied in~\cite{JarSan2021_FS,JarSanSan2020a,JarSanSan2020b} the relations between $(X,f)$, $(\Kc(X),\com{f})$ and $(\Fc(X),\fuz{f})$~with respect to some {\em expansive} and {\em transitivity} properties; C\'anovas and Kupka in~\cite{CaKup2017}, and Kim et al.\ in~\cite{KimChenJu2017,RiKimJu2023} focused on the notion of {\em topological entropy}; and other dynamical properties, including the so-called Devaney, Li-Yorke and distributional chaos, were studied in~\cite{Kup2011,MarPeRo2021,WangWei2012,WuDingLuWang2017}.

The objective of this paper is developing the respective theory of collective dynamics but for the principal topological recurrence-kind properties, which, surprisingly, have not been previously explored even within the traditional framework of the hyperspace extension $(\Kc(X),\com{f})$ on the space of non-empty compact subsets of $X$. This lack of theory is very striking because {\em recurrence} is one of the fundamental, oldest and most investigated concepts in the area of dynamical systems. Indeed, we can date the beginning of this notion at the end of the 19th century when Poincar\'e introduced the later called {\em Poincar\'e recurrence theorem} in the context of Ergodic Theory.

If we focus on Topological Dynamics, as we do along this paper, the appearance of recurrence goes back to the works of Gottschalk and Hedlund in 1955~\cite{GottHed1955_book}, and of Furstenberg in 1981~\cite{Furstenberg1981_book}, together with some more recent advances by authors such as Banks~\cite{Banks1999}, Glasner~\cite{Glasner2004} and Kwietniak et al.~\cite{KwiLiOpYe2017}.

The paper is organized as follows. In Section~\ref{Sec_2:notation} we introduce the notation together with the general background needed. We recall there both the dynamical properties considered, including nonwandering and Van der Waerden systems (see Remark~\ref{Rem:nonwandering.VanDerWaerden} below), and the specific topological setting that we use for the hyperspaces $\Kc(X)$ and $\Fc(X)$. In Section~\ref{Sec_3:K(X)} we focus on the system $(\Kc(X),\com{f})$ where~$X$ is a topological space, in Section~\ref{Sec_4:F(X)} we assume that $X$ is a metric space to compare the recurrence of~$(\Kc(X),\com{f})$ with that of $(\Fc(X),\fuz{f})$, and in Section~\ref{Sec_5:complete} we let $X$ be a complete metric space to obtain stronger dynamical results in terms of various point-recurrence properties. In Section~\ref{Sec_6:conclusions} we include some open problems asking whether some of the complete spaces results still hold in the non-complete case, also if our main results (Theorems~\ref{The:K.rec}~and~\ref{The:F.rec}) can be extended to linear dynamical systems, and we finally discuss the possibility of considering other topologies on $\Kc(X)$ and $\Fc(X)$ as in \cite{WangWei2012}.

\section{Notation and general background}\label{Sec_2:notation}

In this paper we combine the theory of dynamical systems with that of hyperspaces of compact and fuzzy sets, and we recall in this section the basic definitions and notation that we use through the whole paper. We start by the dynamical properties that we are about to study, and then we include some basic facts about the topologies that we consider on the hyperspaces $\Kc(X)$ and $\Fc(X)$. From now on we let $\NN$ be the set of strictly positive integers and we will write $\NN_0 := \NN \cup \{0\}$.

\subsection{Dynamical properties: From transitivity to recurrence}

As stated in the Introduction, along this paper a {\em dynamical system} will be a pair $(X,f)$ formed by a continuous map~$f:X\longrightarrow X$ acting on a topological space $X$. Given any positive integer $N \in \NN$ we will denote by $f_{(N)}:X^N\longrightarrow X^N$ the {\em $N$-fold direct product} of $f$ with itself, i.e.\ the pair $(X^N,f_{(N)})$ will be the dynamical system
\[
f_{(N)} := \underbrace{f\times\cdots\times f}_{N} : \underbrace{X\times\cdots\times X}_{N} \longrightarrow \underbrace{X\times\cdots\times X}_{N},
\]
where $X^N:=X\times\cdots\times X$ is the {\em $N$-fold direct product} of $X$ and $f_{(N)}\left((x_1,...,x_N)\right) := (f(x_1),...,f(x_N))$ for each $N$-tuple $(x_1,...,x_N) \in X^N$. In previous works on collective dynamics the notion of transitivity has been deeply studied (see \cite{Banks2005,LiWaZha2006,Peris2005} and \cite{JarSanSan2020b}):
\begin{enumerate}[--]
	\item a dynamical system $(X,f)$ is called {\em topologically transitive} if given any pair of non-empty open subsets $U,V \subset X$ there exists some (and hence infinitely many) $n \in \NN$ such that $f^n(U) \cap V \neq \varnothing$.
\end{enumerate}
Also the notion of entropy (see \cite{KimChenJu2017,RiKimJu2023}) and many concepts of chaos (Devaney, Li-Yorke, distributional) have been considered (see \cite{JarSan2021_FS,JarSanSan2020a,MarPeRo2021}), but here we are just interested in that of $\Ac$-transitivity since it presents some similarities with the recurrence properties that we are about to introduce:
\begin{enumerate}[--]
	\item a collection of sets $\Ac \subset \Part(\NN_0)$ is called a {\em Furstenberg family} (or just a {\em family}) if it is hereditarily upward but $\varnothing \notin \Ac$, i.e.\ if given $A \in \Ac$ and $B \subset \NN_0$ with $A \subset B$ then $B \in \Ac$ but $\Ac \neq \Part(\NN_0)$;
	
	\item a dynamical system $(X,f)$ is called {\em topologically $\Ac$-transitive}, for a Furstenberg family $\Ac \subset \Part(\NN_0)$, if given any pair of non-empty open subsets $U,V \subset X$ we have that the {\em return set from $U$ to $V$}, which will be denoted by $\Nc_f(U,V) := \{ n \in \NN_0 \ ; \ f^n(U) \cap V \neq \varnothing \}$, belongs to the family $\Ac$.
\end{enumerate}
Typical explicit examples of this last definition arise when we consider: the family~$\Ac_{\infty}$, formed by the infinite subsets of $\NN_0$, so that {\em topological $\Ac_{\infty}$-transitivity} is exactly {\em topological transitivity}; the family of co-finite sets~$\Ac_{cf}$, whose respective {\em topological $\Ac_{cf}$-transitivity} notion is called {\em topological mixing}; or the family~$\Ac_{th}$, formed by the so-called {\em thick sets} (i.e.\ the sets $A \subset \NN_0$ such that for every $\ell \in \NN$ there is $n \in A$ with $[\![ n , n+\ell ]\!] \subset A$), and whose respective {\em topological $\Ac_{th}$-transitivity} notion coincides with the well-known property of {\em topological weak-mixing} (see for instance \cite[Theorem~1.54]{GrPe2011_book}).

Similar to the previous concepts but for recurrence, a dynamical system $(X,f)$ is called:
\begin{enumerate}[--]
	\item {\em topologically recurrent} if given any non-empty open subset $U \subset X$ there exists some (and hence infinitely many) $n \in \NN$ such that $f^n(U) \cap U \neq \varnothing$;
	
	\item {\em multiply recurrent} if given any positive integer $\ell \in \NN$ and any non-empty open subset $U \subset X$ there exists some (and hence infinitely many) $n \in \NN$ such that
	\[
	\bigcap_{0\leq j\leq \ell} f^{-jn}(U) = U \cap f^{-n}(U) \cap f^{-2n}(U) \cap \cdots \cap f^{-\ell n}(U) \neq \varnothing;
	\]
	
	\item {\em topologically $\Ac$-recurrent}, for a Furstenberg family $\Ac \subset \Part(\NN_0)$, if given any non-empty open subset $U \subset X$ we have that the return set $\Nc_f(U,U) = \{ n \in \NN_0 \ ; \ f^n(U) \cap U \neq \varnothing \}$, which from now on will be simply denoted by $\Nc_f(U) := \Nc_f(U,U)$, belongs to the family $\Ac$.
\end{enumerate}

\begin{remark}\label{Rem:nonwandering.VanDerWaerden}
	The previous notions of recurrence are important in Topological Dynamics. Indeed:
	\begin{enumerate}[(a)]
		\item The topologically recurrent dynamical systems have also been called {\em nonwandering systems} in the literature (see \cite[Chapter~1, Section~8]{Furstenberg1981_book}). However, in order to maintain a symmetry between this notion and the respective concept of ``topological transitivity'' already introduced, we will use the terminology of ``topological recurrence'' coined in the 2014 paper \cite{CoMaPa2014}.
		
		\item The multiply recurrent dynamical systems have been deeply studied in the literature (see for instance \cite[Chapter~2,~Section~2]{Furstenberg1981_book}), and they have recently been considered in the works~\cite{CarMur2022_MS,CoPa2012}, but also in the recent 2017 paper \cite{KwiLiOpYe2017} under the name of {\em Van der Waerden systems}.
		
		\item The notion of topological $\Ac$-recurrence, for a general Furstenberg family $\Ac \subset \Part(\NN_0)$, was recently defined in the 2022 paper \cite{BoGrLoPe2022} and its systematic study has been continued in \cite{AA2024}.
		
		\item As it happens for transitivity, the notion of ``topological recurrence'' can be expressed as that of ``topological $\Ac_{\infty}$-recurrence'' for the family $\Ac_{\infty}$ of infinite subsets of $\NN_0$. However, there is no family $\Ac \subset \Part(\NN_0)$ for which ``topological $\Ac$-recurrence'' equals ``multiple recurrence''. Indeed, there exist dynamical systems that even are topologically $\Ac_{cf}$-transitive (i.e.\ topologically mixing) while they are not multiply recurrent (see~\cite[Proposition~4.17]{KwiLiOpYe2017}).
	\end{enumerate}
\end{remark}

In order to give a unified treatment to the previous properties we will use the following definition:

\begin{definition}\label{Def:(l,A)-rec}
	Let $\ell \in \NN$ be a positive integer and let $\Ac \subset \Part(\NN_0)$ be a Furstenberg family. We will say that a dynamical system $(X,f)$ is {\em topologically $(\ell,\Ac)$-recurrent} if given any non-empty open subset $U \subset X$ we have that the {\em $\ell$-return set from $U$ to itself}, which is defined as
	\[
	\Nc_f^{\ell}(U) := \left\{ n \in \NN_0 \ ; \ \bigcap_{0\leq j\leq \ell} f^{-jn}(U) = U \cap f^{-n}(U) \cap f^{-2n}(U) \cap \cdots \cap f^{-\ell n}(U) \neq \varnothing \right\},
	\]
	belongs to the family $\Ac$.
\end{definition}

\begin{remark}\label{Rem:(l,A)-rec}
	Using Definition~\ref{Def:(l,A)-rec} we can give a unified treatment of the three recurrence properties previously introduced. Indeed, since:
	\begin{enumerate}[--]
		\item {\em topological recurrence} coincides with {\em topological $(1,\Ac_{\infty})$-recurrence};
		
		\item {\em multiple recurrence} coincides with being {\em topologically $(\ell,\Ac_{\infty})$-recurrent} for every $\ell \in \NN$;
		
		\item and {\em topological $\Ac$-recurrence} coincides with {\em topological $(1,\Ac)$-recurrence} for the respective family $\Ac$;
	\end{enumerate}
	then the {\em topological $(\ell,\Ac)$-recurrence}'s results proved for all $\ell \in \NN$ and every family $\Ac \subset \Part(\NN_0)$ will be directly valid for topological recurrence, multiple recurrence and topological $\Ac$-recurrence.
\end{remark}

\subsection[The hyperspaces K(X) and F(X)]{The hyperspaces $\Kc(X)$ and $\Fc(X)$}

For a topological space $X$ we will denote its {\em hyperspace of non-empty compact subsets} by $\Kc(X)$. Given a continuous map $f:X\longrightarrow X$ we will denote by $\com{f}:\Kc(X)\longrightarrow\Kc(X)$ its {\em hyperextension}, defined as
\[
\com{f}(K) := f(K) \quad \text{ for each } K \in \Kc(X),
\]
where $f(K) := \{ f(x) \ ; \ x \in K \}$ as usual. It is well-known that the map $\com{f}$ is continuous with respect to the so-called {\em Vietoris topology} on $\Kc(X)$, whose basic open sets are the sets of the form
\[
\Vc(U_1,...,U_N) :=  \left\{ K \in \Kc(X) \ ; \ K \subset \bigcup_{j=1}^N U_j \text{ and } K\cap U_j\neq\varnothing \text{ for all } 1\leq j\leq N \right\},
\]
where $N \in \NN$ and $U_1,...,U_N$ are non-empty open subsets of $X$. Moreover, if the space $X$ is metrizable by a metric $d:X\times X\longrightarrow[0,+\infty[$, then the Vietoris topology is equivalent to that induced by the so-called {\em Hausdorff metric} $d_H:\Kc(X)\times\Kc(X)\longrightarrow[0,+\infty[$ defined by
\[
d_H(K_1,K_2) := \max\left\{ \max_{x_1 \in K_1} d(x_1,K_2) , \max_{x_2 \in K_2} d(x_2,K_1) \right\} \quad \text{ for each pair } K_1,K_2 \in \Kc(X).
\]
We will denote by $\Bc_H(K,\eps)$ the open ball centered at $K \in \Kc(X)$ and of radius $\eps>0$ for the metric $d_H$, and along the paper we will need the following well-known fact:

\begin{proposition}\label{Pro:Hausdorff}
	Let $(X,d)$ be a metric space and let $A,B,C,D \in \Kc(X)$. Then we have that
	\[
	d_H(A\cup B, C \cup D) \leq \max\{ d_H(A,C) , d_H(B,D) \}.
	\]
\end{proposition}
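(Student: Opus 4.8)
The plan is to unwind the definition of the Hausdorff metric and reduce the inequality to an elementary monotonicity property of the point-to-set distance. Writing, for a point $x \in X$ and a compact set $K$, the quantity $d(x,K) = \min_{y \in K} d(x,y)$, the Hausdorff distance $d_H(P,Q)$ is by definition the maximum of the two one-sided excesses $\max_{x \in P} d(x,Q)$ and $\max_{x \in Q} d(x,P)$. Since $d_H$ is symmetric, it suffices to bound a single one-sided excess, say $\max_{x \in A\cup B} d(x, C\cup D)$, by the right-hand side $M := \max\{d_H(A,C), d_H(B,D)\}$; the reverse excess is then handled by exchanging the roles of the pairs $(A,B)$ and $(C,D)$.

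First I would fix an arbitrary point $x \in A \cup B$ and split into two cases according to whether $x \in A$ or $x \in B$. If $x \in A$, then $d(x,C) \leq d_H(A,C) \leq M$ directly from the definition of the Hausdorff metric, and if $x \in B$ then similarly $d(x,D) \leq d_H(B,D) \leq M$. The key (and essentially only) observation is the monotonicity of the distance function under set inclusion: since $C \subset C\cup D$ and $D \subset C\cup D$, we have both $d(x, C\cup D) \leq d(x,C)$ and $d(x, C\cup D) \leq d(x,D)$. Combining this with the case analysis gives $d(x, C\cup D) \leq M$ in either case.

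Taking the maximum over all $x \in A \cup B$ then yields $\max_{x \in A\cup B} d(x, C\cup D) \leq M$, and the symmetric argument gives the same bound for the reverse excess $\max_{x \in C\cup D} d(x, A\cup B)$. Since $d_H(A\cup B, C\cup D)$ is precisely the larger of these two excesses, the claimed inequality follows. I do not expect any genuine obstacle here: the statement is a routine consequence of compactness (which guarantees that the infima defining the point-to-set distances are attained and that the relevant maxima exist) together with the inclusion-monotonicity of $d(x,\cdot)$; the only point requiring a little care is keeping straight the direction of the inequality $d(x, C\cup D) \leq d(x,C)$ when one passes to the larger set.
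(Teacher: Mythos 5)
Your proof is correct. Note that the paper itself offers no proof of this proposition: it is stated as a ``well-known fact'' with a general pointer to the hyperspace literature (\cite{IllNad1999}), so there is nothing to compare against; your argument --- bounding each one-sided excess via the case split $x \in A$ or $x \in B$ together with the monotonicity $d(x, C\cup D) \leq \min\{d(x,C), d(x,D)\}$, and then invoking the symmetry of the statement under exchanging $(A,B)$ with $(C,D)$ --- is exactly the standard proof one would supply, and every step checks out.
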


We refer the reader to \cite{IllNad1999} for a detailed study of hyperspaces of compact sets.

A {\em fuzzy~set} on a topological space $X$ is a function $u:X\longrightarrow[0,1]$, where the value $u(x) \in [0,1]$ denotes the degree of membership of the point $x$ in the fuzzy set $u$. Following the notation used in the literature we will write
\[
u_{\alpha} := \{ x \in X \ ; \ u(x) \geq \alpha \} \text{ for each } \alpha \in \ ]0,1] \quad \text{ and } \quad u_0 := \cl{\{ x \in X \ ; \ u(x)>0 \}} = \cl{\bigcup_{\alpha\in]0,1]} u_{\alpha}}.
\]
In this paper we will work with the {\em hyperspace of normal fuzzy sets}, i.e.\ the space formed by the fuzzy sets that are upper-semicontinuous functions and such that $u_0$ is compact and $u_1$ is non-empty. This space will be denoted by $\Fc(X)$, and it is worth noticing that given any non-empty compact set $K \in \Kc(X)$ then the {\em characteristic function} on $K$, denoted by $\chi_K:X\longrightarrow[0,1]$, belongs to $\Fc(X)$.

Given a (not necessarily continuous) map $f:X\longrightarrow X$ we will denote by $\fuz{f}$ its {\em Zadeh extension}, which transforms each (not necessarily normal) fuzzy set $u:X\longrightarrow[0,1]$ into the following fuzzy set
\begin{equation}\label{eq:Zadeh}
\fuz{f}(u):X\longrightarrow[0,1] \quad \text{ where } \quad \fuz{f}(u)(x) :=
\left\{
\begin{array}{lcc}
	\sup\{ u(y) \ ; \ y \in f^{-1}(x) \}, & \text{ if } f^{-1}(x) \neq \varnothing, \\[7.5pt]
	0, & \text{ if } f^{-1}(x) = \varnothing.
\end{array}
\right.
\end{equation}
It follows from \cite[Propositions~3.1 and 4.9]{JarSanSan2020a} that, when $X$ is a Hausdorff topological space and the map $f:X\longrightarrow X$ is continuous, then the respective Zadeh extension $\fuz{f}$ (defined as in \eqref{eq:Zadeh} and restricted to the space of normal fuzzy sets $\Fc(X)$) is a well-defined self-map of $\Fc(X)$. In order to talk about continuity for $\fuz{f}:\Fc(X)\longrightarrow\Fc(X)$ we must endow $\Fc(X)$ with some topology, and this forces us to consider metric spaces. Following \cite{JarSan2021_FS,KimChenJu2017,MarPeRo2021,RiKimJu2023}, given a metric space $(X,d)$ we will consider:
\begin{enumerate}[--]
	\item the {\em supremum metric} $d_{\infty}:\Fc(X)\times\Fc(X)\longrightarrow[0,+\infty[$ defined for each pair $u,v \in \Fc(X)$ as
	\[
	d_{\infty}(u,v) := \sup_{\alpha \in [0,1]} d_H(u_{\alpha},v_{\alpha}),
	\]
	where $d_H$ denotes the already introduced Hausdorff metric on $\Kc(X)$. We denote by $\Bc_{\infty}(u,\eps)$ the open ball centered at $u \in \Fc(X)$ and of radius $\eps>0$ for the metric $d_{\infty}$, and we will denote by $\Fc_{\infty}(X)$ the topological space $(\Fc(X),d_{\infty})$ for short, which is a metric space that is non-separable as soon as~$X$ has more than one point (see Section~\ref{Sec_5:complete} for more on the non-separability of $\Fc_{\infty}(X)$).
	
	\item the {\em Skorokhod metric} $d_{0}:\Fc(X)\times\Fc(X)\longrightarrow[0,+\infty[$ defined for each pair $u,v \in \Fc(X)$ as
	\[
	d_0(u,v) := \inf\left\{ \eps \ ; \ \text{there is } \xi \in \Tc \text{ such that } \sup_{\alpha\in[0,1]} |\xi(\alpha)-\alpha| \leq \eps \text{ and } d_{\infty}(u,\xi\circ v) \leq \eps \right\},
	\]
	where $\Tc$ is the set of strictly increasing homeomorphisms of the unit interval $\xi:[0,1]\longrightarrow[0,1]$. We denote by $\Bc_{0}(u,\eps)$ the open ball centered at $u \in \Fc(X)$ and of radius $\eps>0$ for the metric $d_{0}$, and we will denote by $\Fc_{0}(X)$ the topological space $(\Fc(X),d_{0})$ for short, which is a metric space.
\end{enumerate}

It was showed in \cite[Theorems~4.7~and~4.8]{JarSanSan2020a} that $\fuz{f}$ is continuous in $\Fc_{\infty}(X)$ and $\Fc_{0}(X)$ if and only if the map $f$ is continuous on the metric space $(X,d)$. Let us include some important well-known properties of the Zadeh extension $\fuz{f}$ that we will use without citing them (see \cite{ChaRo2008,JarSan2021_FS,JarSanSan2020a}):

\begin{proposition}\label{Pro:Zadeh}
	Let $(X,f)$ be a dynamical system where $(X,d)$ is a metric space, $u,v \in \Fc(X)$, $\alpha \in [0,1]$, $n \in \NN_0$ and $K \in \Kc(X)$. Then:
	\begin{enumerate}[{\em(a)}]
		\item $\left[ \fuz{f}(u) \right]_{\alpha} = f(u_{\alpha})$;
		
		\item $\left( \fuz{f} \right)^n = \widehat{f^n}$;
		
		\item $\fuz{f}\left( \chi_K \right) = \chi_{\com{f}(K)} = \chi_{f(K)}$;
		
		\item $d_{0}(u,v) \leq d_{\infty}(u,v)$ but for characteristic functions we have that $d_{0}(u,\chi_K)=d_{\infty}(u,\chi_K)$.
	\end{enumerate}
\end{proposition}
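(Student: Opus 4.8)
The plan is to derive everything from part~(a), which is the only genuinely analytic statement; parts~(b), (c) and the bulk of~(d) then reduce to level-set bookkeeping. For part~(a), fix $\alpha \in \ ]0,1]$ and prove the two inclusions between $\left[\fuz{f}(u)\right]_{\alpha}$ and $f(u_{\alpha})$. The inclusion $f(u_{\alpha}) \subset \left[\fuz{f}(u)\right]_{\alpha}$ is immediate: if $x = f(y)$ with $u(y) \geq \alpha$, then $y \in f^{-1}(x)$ forces $\fuz{f}(u)(x) = \sup\{u(z) \ ; \ z \in f^{-1}(x)\} \geq \alpha$. The reverse inclusion is where the hypotheses on $u$ enter. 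If $\fuz{f}(u)(x) \geq \alpha > 0$, then $f^{-1}(x) \neq \varnothing$ and the supremum defining $\fuz{f}(u)(x)$ is taken over the closed set $f^{-1}(x)$ (closed since $f$ is continuous); intersecting with the compact support $u_0$ and using that $u$ vanishes off $u_0$, the supremum is really taken over the compact set $f^{-1}(x) \cap u_0$. Since $u$ is upper-semicontinuous, its supremum on this compact set is attained at some $y^* \in f^{-1}(x)$ with $u(y^*) = \fuz{f}(u)(x) \geq \alpha$, so $y^* \in u_{\alpha}$ and $x = f(y^*) \in f(u_{\alpha})$. The case $\alpha = 0$ is handled separately: one checks $\{x \ ; \ \fuz{f}(u)(x) > 0\} = f(\{u > 0\})$ directly from the definition and then takes closures, using that $f(u_0) = f\left(\cl{\{u>0\}}\right)$ is compact, hence closed, to identify $\left[\fuz{f}(u)\right]_0 = \cl{f(\{u>0\})} = f(u_0)$.

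With part~(a) in hand, parts~(b) and~(c) are formal. A normal fuzzy set is recovered from its family of level sets via $u(x) = \sup\{\alpha \ ; \ x \in u_{\alpha}\}$, so two elements of $\Fc(X)$ coincide precisely when all their level sets agree. For~(b), iterating~(a) gives $\left[(\fuz{f})^n(u)\right]_{\alpha} = f^n(u_{\alpha})$, while applying~(a) to the map $f^n$ gives $\left[\widehat{f^n}(u)\right]_{\alpha} = f^n(u_{\alpha})$; the level sets match for every $\alpha$, whence $(\fuz{f})^n = \widehat{f^n}$. For~(c), since $(\chi_K)_{\alpha} = K$ for every $\alpha$ (using that $K$ is compact, hence closed, for $\alpha = 0$), part~(a) yields $\left[\fuz{f}(\chi_K)\right]_{\alpha} = f(K)$ for all $\alpha$, which is exactly the level-set description of $\chi_{f(K)} = \chi_{\com{f}(K)}$.

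Finally, for part~(d) the inequality $d_0(u,v) \leq d_{\infty}(u,v)$ comes from testing the infimum defining $d_0$ with the identity homeomorphism $\xi = \mathrm{id} \in \Tc$, for which $\sup_{\alpha}|\xi(\alpha) - \alpha| = 0$ and $d_{\infty}(u, \xi \circ v) = d_{\infty}(u,v)$. The equality for characteristic functions rests on a single observation: every $\xi \in \Tc$ fixes the endpoints $0$ and $1$, so since $\chi_K$ only takes the values $0$ and $1$ we have $\xi \circ \chi_K = \chi_K$; consequently every competitor $\eps$ in the infimum defining $d_0(u,\chi_K)$ must satisfy $\eps \geq d_{\infty}(u, \xi \circ \chi_K) = d_{\infty}(u,\chi_K)$, forcing $d_0(u,\chi_K) \geq d_{\infty}(u,\chi_K)$ and hence equality with the bound already proved.

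I expect the attainment of the supremum in part~(a) --- i.e.\ the passage from $\sup\{u(y) \ ; \ y \in f^{-1}(x)\} \geq \alpha$ to the existence of an actual $y^* \in f^{-1}(x)$ with $u(y^*) \geq \alpha$ --- to be the only delicate point, since it is precisely here that upper-semicontinuity and compactness of the support are indispensable; everything else is either a reduction to~(a) via level sets or the elementary endpoint-fixing property of $\Tc$.
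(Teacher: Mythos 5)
Your proof is correct. One point of comparison: the paper does not actually prove Proposition~\ref{Pro:Zadeh} --- it records these statements as well-known properties of the Zadeh extension and cites \cite{ChaRo2008,JarSan2021_FS,JarSanSan2020a} --- so your argument is a self-contained substitute rather than a variant of an in-paper proof. Your structure (deriving (b) and (c) from the level-set identity (a) together with the recovery formula $u(x)=\sup\{\alpha \ ; \ x\in u_{\alpha}\}$, and settling (d) by testing with $\xi=\mathrm{id}$ plus the observation that every $\xi\in\Tc$ fixes $0$ and $1$, hence $\xi\circ\chi_K=\chi_K$) is the standard route in that literature, and you correctly isolate the one genuinely analytic point: upper semicontinuity of $u$ and compactness of $f^{-1}(x)\cap u_0$ give attainment of the supremum in \eqref{eq:Zadeh}, which upgrades the trivial inclusion $f(u_{\alpha})\subset\left[\fuz{f}(u)\right]_{\alpha}$ to equality; your separate treatment of $\alpha=0$ via $\{\fuz{f}(u)>0\}=f(\{u>0\})$ and compactness of $f(u_0)$ is also right. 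The only step worth making explicit is that iterating (a) in part (b) presupposes $\fuz{f}(u)\in\Fc(X)$; this in fact follows from (a) itself (each level set $f(u_{\alpha})$ is compact, $f(u_1)\neq\varnothing$, and closedness of the superlevel sets yields upper semicontinuity), and in any case the paper records the self-map property separately from \cite[Propositions~3.1 and 4.9]{JarSanSan2020a}, so no gap results.
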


Also, the following well-known normal fuzzy sets lemma will be very useful (see \cite{JarSan2021_FS,JarSanSan2020a,JarSanSan2020b,MarPeRo2021}):

\begin{lemma}\label{Lem:eps.pisos}
	Let $(X,d)$ be a metric space. Given any $u \in \Fc(X)$ and $\eps>0$ there exist numbers $0 = \alpha_0 < \alpha_1 < \alpha_2 < ... < \alpha_N = 1$ such that $d_H(u_{\alpha}, u_{\alpha_{i+1}})<\eps$ for each $\alpha \in ]\alpha_i, \alpha_{i+1}]$ and $1\leq i\leq N-1$. In particular, since $d_H(u_{\alpha}, u_{\alpha_1})<\eps$ for every $\alpha \in \ ]0, \alpha_1]$, we also have that $d_H(u_0,u_{\alpha_1})\leq\eps$.
\end{lemma}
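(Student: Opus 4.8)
The plan is to treat the one-parameter family of level sets $\alpha\mapsto u_\alpha$ as a map from $[0,1]$ into the metric space $(\Kc(X),d_H)$, to show it is left-continuous and admits a right-limit at every point, and then to produce the partition by a real-induction argument on $[0,1]$. First I would record the elementary facts I need. By upper-semicontinuity each super-level set $u_\alpha=\{x : u(x)\geq\alpha\}$ (for $\alpha\in\,]0,1]$) is closed, and since $u_\alpha\subseteq u_0$ with $u_0$ compact, each $u_\alpha$ (which contains the nonempty $u_1$) belongs to $\Kc(X)$; moreover the family is nested, $u_\beta\subseteq u_\alpha$ whenever $\alpha\leq\beta$.

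The technical core consists of two monotone-convergence facts for the Hausdorff metric, neither of which needs completeness of $X$. First, if $K_1\supseteq K_2\supseteq\cdots$ are nonempty compacta with intersection $K$, then $d_H(K_n,K)\to 0$; the only nontrivial half, $\sup_{x\in K_n}d(x,K)\to 0$, follows by extracting a convergent subsequence inside the compactum $K_1$ and using that each $K_m$ is closed. Second, if $K_1\subseteq K_2\subseteq\cdots$ are nonempty compacta whose union has compact closure $K$, then $d_H(K_n,K)\to 0$, by covering $K$ with finitely many small balls centered at points of the dense set $\bigcup_n K_n$. Applying the first fact to $u_\beta\downarrow\bigcap_{\beta<\alpha}u_\beta=u_\alpha$ gives left-continuity, $\lim_{\beta\uparrow\alpha}d_H(u_\beta,u_\alpha)=0$; applying the second to $u_\beta\uparrow$ with $u_{\alpha^+}:=\cl{\bigcup_{\beta>\alpha}u_\beta}\subseteq u_0$ gives the right-limit, $\lim_{\beta\downarrow\alpha}d_H(u_\beta,u_{\alpha^+})=0$ (with $u_{0^+}=u_0$ by definition of $u_0$).

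With these in hand I would run real induction on the set $G$ of those $t\in[0,1]$ for which $[0,t]$ admits a finite partition $0=\alpha_0<\cdots<\alpha_k=t$ with $d_H(u_\alpha,u_{\alpha_{i+1}})<\eps$ for all $\alpha\in\,]\alpha_i,\alpha_{i+1}]$ and all $i<k$. Clearly $0\in G$. For the right-extension step, if $t\in G$ and $t<1$, the right-limit yields $\delta>0$ with $d_H(u_\beta,u_{t^+})<\eps/2$ on $]t,t+\delta]$; appending $\alpha_{k+1}=t+\delta$ and estimating $d_H(u_\alpha,u_{t+\delta})\leq d_H(u_\alpha,u_{t^+})+d_H(u_{t^+},u_{t+\delta})<\eps$ for $\alpha\in\,]t,t+\delta]$ shows $[t,t+\delta]\subseteq G$. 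For the left-limit step, if $[0,t)\subseteq G$ with $t>0$, left-continuity yields $\delta>0$ with $d_H(u_\alpha,u_t)<\eps/2$ on $]t-\delta,t[$; choosing $s\in\,]\max(0,t-\delta),t[\subseteq G$ and appending the point $t$ to a good partition of $[0,s]$ gives $d_H(u_\alpha,u_t)<\eps$ on $]s,t]$, so $t\in G$. Real induction then forces $1\in G$, which is exactly the asserted partition (with the interval $]0,\alpha_1]$ corresponding to $i=0$).

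Finally, the ``in particular'' clause is immediate: the $i=0$ interval gives $d_H(u_\alpha,u_{\alpha_1})<\eps$ for all $\alpha\in\,]0,\alpha_1]$, and letting $\alpha\downarrow 0$ while using the second monotone fact (now $u_\alpha\uparrow u_0$) yields $d_H(u_0,u_{\alpha_1})=\lim_{\alpha\downarrow 0}d_H(u_\alpha,u_{\alpha_1})\leq\eps$. I expect the main obstacle to be conceptual rather than computational: the map $\alpha\mapsto u_\alpha$ genuinely jumps (it is only left-continuous), so no naive uniform-continuity or Lebesgue-number argument applies directly. The correct viewpoint is that it is a regulated map into $(\Kc(X),d_H)$, and the two delicate points are comparing each $u_\alpha$ to the \emph{right} endpoint of its subinterval and keeping the inequality strict; both are absorbed by splitting the budget as $\eps/2+\eps/2$.
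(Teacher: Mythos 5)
Your argument is correct, but there is nothing in the paper to compare it against: Lemma~\ref{Lem:eps.pisos} is stated there as a well-known fact and delegated entirely to the literature (\cite{JarSan2021_FS,JarSanSan2020a,JarSanSan2020b,MarPeRo2021}), with no proof given. Judged as a self-contained replacement for that citation, your proof works. The two monotone convergence facts for $d_H$ are correct and indeed require no completeness of $X$; together with nestedness of the level sets they give left-continuity and right limits of $\alpha\mapsto u_\alpha$. One small point you pass over silently: both facts are proved for sequences $\beta_n\uparrow\alpha$, resp.\ $\beta_n\downarrow\alpha$, and the upgrade to the full one-sided limits over real $\beta$ uses the sandwich $u_{\beta_n}\subseteq u_\beta\subseteq u_\alpha$, resp.\ $u_{\beta_n}\subseteq u_\beta\subseteq u_{\alpha^+}$, available precisely because the family is nested; this deserves one line but is not a gap. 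The real-induction assembly of the partition is valid, including the $\tfrac{\eps}{2}+\tfrac{\eps}{2}$ bookkeeping at the right-extension and left-limit steps, and it delivers the conclusion for all intervals $]\alpha_i,\alpha_{i+1}]$ with $0\leq i\leq N-1$, which covers both the displayed condition and the ``in particular'' clause. For comparison, the proofs found in the sources the paper cites run differently: the usual argument either covers $[0,1]$ by finitely many intervals on which both one-sided oscillations of $\alpha\mapsto u_\alpha$ are below $\eps$ and extracts a finite subcover, or first shows that only finitely many levels can have a right jump of size $\geq\eps$, since choosing witnesses $x_k\in u_{\beta_k}$ with $d(x_k,u_{\beta_k^+})\geq\eps$ at infinitely many levels $\beta_1<\beta_2<\cdots$ would produce an infinite $\eps$-separated subset of the totally bounded compact set $u_0$. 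All routes are of comparable length; the advantage of yours is that the possible accumulation of small jumps is handled completely transparently (that is exactly the left-limit step of the induction), which, as you rightly note, is the point where a naive uniform-continuity or Lebesgue-number argument breaks down.
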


\section[Recurrence in K(X)]{Recurrence in $\Kc(X)$}\label{Sec_3:K(X)}

To study recurrence in $\Kc(X)$, the following general result will directly imply Corollary~\ref{Cor:K.rec} below, in which topological recurrence, multiple recurrence and topological $\Ac$-recurrence are included.

\begin{theorem}\label{The:K.rec}
	Let $\ell \in \NN$ be a positive integer and let $\Ac \subset \Part(\NN_0)$ be a Furstenberg family. Then, given a continuous map $f:X\longrightarrow X$ on a topological space $X$, the following statements are equivalent:
	\begin{enumerate}[{\em(i)}]
		\item $f_{(N)}:X^N\longrightarrow X^N$ is topologically $(\ell,\Ac)$-recurrent for every $N \in \NN$;
		
		\item $\com{f}_{(N)}:\Kc(X)^N\longrightarrow \Kc(X)^N$ is topologically $(\ell,\Ac)$-recurrent for every $N \in \NN$;
		
		\item $\com{f}:\Kc(X)\longrightarrow \Kc(X)$ is topologically $(\ell,\Ac)$-recurrent.
	\end{enumerate}
\end{theorem}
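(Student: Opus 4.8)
The plan is to prove the cycle (i)$\Rightarrow$(ii)$\Rightarrow$(iii)$\Rightarrow$(i), reducing everything to two elementary ``dictionary'' identities between return sets in $X$ and in $\Kc(X)$, together with the upward heredity of $\Ac$. First I would record the identities. Since $f_{(N)}^{-jn}(U_1\times\cdots\times U_N)=f^{-jn}(U_1)\times\cdots\times f^{-jn}(U_N)$ and intersection distributes over products, one gets
\[
\Nc_{f_{(N)}}^{\ell}(U_1\times\cdots\times U_N)=\bigcap_{i=1}^{N}\Nc_f^{\ell}(U_i),
\]
and the identical computation in the hyperspace gives $\Nc_{\com{f}_{(N)}}^{\ell}(\mathcal{O}_1\times\cdots\times\mathcal{O}_N)=\bigcap_{m=1}^{N}\Nc_{\com{f}}^{\ell}(\mathcal{O}_m)$ for basic Vietoris open sets $\mathcal{O}_m$. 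For a single $U\subset X$ I would check the confinement identity $\Nc_{\com{f}}^{\ell}(\Vc(U))=\Nc_f^{\ell}(U)$: for ``$\supseteq$'' send $x\in\bigcap_j f^{-jn}(U)$ to $\{x\}\in\Vc(U)$, and for ``$\subseteq$'' pick any point of a witnessing compact $K$ with $\com{f}^{jn}(K)\subset U$ for all $0\le j\le\ell$. The remaining ingredient is the finite-set-witness inclusion
\[
\bigcap_{i=1}^{N}\Nc_f^{\ell}(U_i)\subseteq\Nc_{\com{f}}^{\ell}\big(\Vc(U_1,\dots,U_N)\big),
\]
obtained by taking $K=\{x_1,\dots,x_N\}$ with $x_i$ realizing the $i$-th return, so that $\com{f}^{jn}(K)$ meets every $U_i$ and lies inside $\bigcup_i U_i$.

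With these, (i)$\Rightarrow$(ii) becomes bookkeeping: given a basic box $\prod_m\Vc(U_1^{(m)},\dots,U_{k_m}^{(m)})$ in $\Kc(X)^N$, I list all $M=\sum_m k_m$ sets, apply (i) to $f_{(M)}$ to obtain $\bigcap_{m,i}\Nc_f^{\ell}(U_i^{(m)})\in\Ac$, and note (via the finite-set-witness inclusion applied factor by factor) that this set is contained in $\bigcap_m\Nc_{\com{f}}^{\ell}(\mathcal{O}_m)$; heredity then yields (ii). The implication (ii)$\Rightarrow$(iii) is the case $N=1$. The same identities also give a transparent (ii)$\Rightarrow$(i): the box $\prod_i\Vc(U_i)$ of \emph{single-set} Vietoris opens has return set exactly $\bigcap_i\Nc_f^{\ell}(U_i)$, so each factor confines its compact witness to one $U_i$ and no ``swapping'' can occur.

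The heart of the matter is (iii)$\Rightarrow$(i). By the product identity and heredity it suffices, for each finite family $U_1,\dots,U_N\subset X$, to produce one basic Vietoris open set $\mathcal{O}$ with
\[
\Nc_{\com{f}}^{\ell}(\mathcal{O})\subseteq\bigcap_{i=1}^{N}\Nc_f^{\ell}(U_i),
\]
for then (iii) places the left-hand side in $\Ac$ and heredity concludes. The naive choice $\mathcal{O}=\Vc(U_1,\dots,U_N)$ does \emph{not} work: a compact witness of $\com{f}$-recurrence may permute its points among the $U_i$ under $f^n$, so $\Nc_{\com{f}}^{\ell}(\Vc(U_1,\dots,U_N))$ is in general a strict \emph{superset} of the intersection (already for the two-point swap map). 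When $\bigcap_i U_i\neq\varnothing$ one may instead take $\mathcal{O}=\Vc(\bigcap_i U_i)$ and invoke the confinement identity together with monotonicity $\Nc_f^{\ell}(\bigcap_i U_i)\subseteq\Nc_f^{\ell}(U_i)$.

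I expect the genuine obstacle to be exactly the disjoint (more generally, non-overlapping) case, where one must manufacture an open set that is simultaneously ``more recurrence-demanding'' than every $U_i$ while confining a witness so tightly that the permutation of points cannot arise. I would attempt this by shrinking the witnessing compact sets so that their return times lie inside the intersection, and, if a direct confinement fails, by bootstrapping through the product systems already shown equivalent in (i)$\Leftrightarrow$(ii); I regard this swap-avoiding confinement construction as the one genuinely non-formal step, with everything else reducing to the two identities and the upward closure of $\Ac$.
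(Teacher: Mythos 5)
Your preliminary identities are all correct, and with them the implications (i)$\Rightarrow$(ii) and (ii)$\Rightarrow$(iii) are complete (the first is essentially the paper's own finite-witness argument), as is your bonus direct proof of (ii)$\Rightarrow$(i) via boxes of single-set Vietoris opens and the confinement identity $\Nc_{\com{f}}^{\ell}(\Vc(U))=\Nc_f^{\ell}(U)$. You have also diagnosed the crux of (iii)$\Rightarrow$(i) correctly: $\Nc_{\com{f}}^{\ell}(\Vc(U_1,\dots,U_N))$ is in general a strict superset of $\bigcap_i\Nc_f^{\ell}(U_i)$ because a compact witness may permute its points among the $U_i$, and your fix $\mathcal{O}=\Vc\bigl(\bigcap_i U_i\bigr)$ covers only the overlapping case. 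But precisely at the non-overlapping case the proposal stops being a proof: the ``swap-avoiding confinement construction'' is announced, not performed, and neither of the two remedies you sketch closes the gap. Bootstrapping through the equivalence (i)$\Leftrightarrow$(ii) is circular, since from (iii) alone you do not yet know that any $f_{(N)}$ or $\com{f}_{(N)}$ with $N\geq 2$ is recurrent --- that is exactly what is being proved, so there is no product system available to bootstrap through. And ``shrinking the witnessing compact sets so that their return times lie inside the intersection'' has no content when the $U_i$ are pairwise disjoint: there is no intersection to aim at, and no candidate open set built from the $U_i$ alone (such as $\Vc(U_1)$, say) has its return set contained in all the $\Nc_f^{\ell}(U_j)$ for an arbitrary $f$.

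What the paper does, and what is missing from your attempt, is an induction on $N$ combined with a dichotomy that extracts the needed open set \emph{from the failure itself}. Put $E:=\Nc_{\com{f}}^{\ell}(\Vc(U_1,\dots,U_N))\in\Ac$. Either every $n\in E$ admits a witness $K_n$ meeting $U_j\cap f^{-n}(U_j)\cap\dots\cap f^{-\ell n}(U_j)$ for all $j$, in which case $E\subseteq\bigcap_j\Nc_f^{\ell}(U_j)$ and heredity finishes; or else there is $n_0\in E$ such that every witness $K$ fails for some index $j_0=j_0(K)$. Fixing such a $K$ and $j_0$, any point $x\in K\cap U_{j_0}$ has some iterate $f^{i_0n_0}(x)\notin U_{j_0}$, yet $f^{i_0n_0}(x)\in\bigcup_j U_j$ because $\com{f}^{i_0n_0}(K)\in\Vc(U_1,\dots,U_N)$; hence $f^{i_0n_0}(x)\in U_{k_0}$ for some $k_0\neq j_0$. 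The key move is then to \emph{merge} the two targets into the non-empty open set $U_0:=U_{j_0}\cap f^{-i_0n_0}(U_{k_0})$ and to apply the induction hypothesis (recurrence of $f_{(N-1)}$) to the $N-1$ sets consisting of $U_0$ together with the $U_j$, $j\neq j_0,k_0$. Any point recurring in $U_0$ recurs in $U_{j_0}$ since $U_0\subseteq U_{j_0}$, and its image under $f^{i_0n_0}$ recurs in $U_{k_0}$ because $f^{in}\circ f^{i_0n_0}=f^{i_0n_0}\circ f^{in}$; hence the resulting return set $F\in\Ac$ lies inside $\bigcap_j\Nc_f^{\ell}(U_j)$, and heredity concludes. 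This merging trick --- trading two target sets for one more demanding set at the cost of one product factor, with the merged set determined by the data $(n_0,K,j_0,i_0,k_0)$ of the failed confinement --- is the genuinely non-formal step you isolated but did not supply; without it (or an equivalent substitute) the proposal does not establish (iii)$\Rightarrow$(i).
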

\begin{proof}
	(i) $\Rightarrow$ (ii): Given $N \in \NN$ basic Vietoris-open sets, denoted by $\Vc(U_1^j,...,U_{k_j}^j)$ for $1\leq j\leq N$, set
	\[
	A := \bigcap_{1\leq j\leq N} \Nc_{\com{f}}^{\ell}\left( \Vc(U_1^j,...,U_{k_j}^j) \right) = \bigcap_{1\leq j\leq N} \left\{ n \in \NN_0 \ ; \ \bigcap_{0\leq i\leq \ell} \com{f}^{-in}\left( \Vc(U_1^j,...,U_{k_j}^j) \right) \neq \varnothing \right\}.
	\]
	We have to show that the set $A$ belongs to the family $\Ac$. Indeed, by assumption we have that $f_{(M)}$ is topologically $(\ell,\Ac)$-recurrent for the positive integer $M:=\sum_{j=1}^n k_j \in \NN$, so that the set
	\[
	B := \Nc_{f_{(M)}}^{\ell}\left( \prod_{1\leq j\leq N}^{1\leq k\leq k_j} U_k^j \right) = \bigcap_{1\leq j\leq N}^{1\leq k\leq k_j} \left\{ n \in \NN_0 \ ; \ \bigcap_{0\leq i\leq \ell} \com{f}^{-in}( U_k^j ) \neq \varnothing \right\}
	\]
	does belong to $\Ac$. Given now any arbitrary but fixed $n \in B$ we can select points
	\[
	x_k^j \in U_k^j \cap f^{-n}(U_k^j) \cap \cdots \cap f^{-\ell n}(U_k^j) \quad \text{ for each } 1\leq j\leq N \text{ and } 1\leq k\leq k_j,
	\]
	and define $K_j := \{ x_1^j, x_2^j, ..., x_{k_j}^j \}$ for each $1\leq j\leq N$. It is not difficult to check that
	\[
	K_j \in \Vc(U_1^j,...,U_{k_j}^j) \cap \com{f}^{-n}\left( \Vc(U_1^j,...,U_{k_j}^j) \right) \cap \cdots \cap \com{f}^{-\ell n}\left( \Vc(U_1^j,...,U_{k_j}^j) \right) \quad \text{ for each } 1\leq j \leq N,
	\]
	so that $n \in A$. Since $n \in B$ was arbitrary we deduce that $B \subset A$ and hence $A \in \Ac$ as we had to show.
	
	(ii) $\Rightarrow$ (iii): This is obvious.
	
	(iii) $\Rightarrow$ (i): We proceed by induction on $N \in \NN$. For $N=1$, given any arbitrary but fixed non-empty open subset $U \subset X$ then the topological $(\ell,\Ac)$-recurrence of $\com{f}$ shows that
	\[
	C := \Nc_{\com{f}}^{\ell}\left( \Vc(U) \right) = \left\{ n \in \NN_0 \ ; \ \bigcap_{0\leq i\leq \ell} \com{f}^{-in}\left( \Vc(U) \right) \neq \varnothing \right\} \in \Ac.
	\]
	Hence, given any arbitrary but fixed $n \in C$ we can find some $K_n \in \Vc(U) \cap \com{f}^{-n}(\Vc(U)) \cap \cdots \cap \com{f}^{-\ell n}(\Vc(U))$. In particular, given any point $x \in K_n$ we have that $x \in U \cap f^{-n}(U) \cap \cdots \cap f^{-\ell n}(U)$. Since $n \in C$ was arbitrary we deduce that
	\[
	C \subset \Nc_f^{\ell}(U) \quad \text{ and hence } \quad \Nc_f^{\ell}(U) \in \Ac.
	\]
	This implies that $f$ is topologically $(\ell,\Ac)$-recurrent by the arbitrariness of the open set $U$.
	
	Assume now that $f_{(N-1)}$ is topologically $(\ell,\Ac)$-recurrent for some $N>1$ and consider $N$ arbitrary but fixed non-empty open subsets $U_1,...,U_N \subset X$. In order to finally conclude that the map $f_{(N)}$ is topologically $(\ell,\Ac)$-recurrent we must show that the set
	\[
	D := \Nc_{f_{(N)}}^{\ell}\left( \prod_{1\leq j\leq N} U_j \right) = \bigcap_{1\leq j\leq N} \left\{ n \in \NN_0 \ ; \ \bigcap_{0\leq i\leq \ell} f^{-in}( U_j ) \neq \varnothing \right\}
	\]
	belongs to $\Ac$. Using the topological $(\ell,\Ac)$-recurrence of $\com{f}$ we have that
	\[
	E := \Nc_{\com{f}}^{\ell}\left( \Vc(U_1,...,U_N) \right) = \left\{ n \in \NN_0 \ ; \ \bigcap_{0\leq i\leq \ell} \com{f}^{-in}\left( \Vc(U_1,...,U_N) \right) \neq \varnothing \right\} \in \Ac.
	\]
	From now on we have two possibilities:
	\begin{enumerate}[--]
		\item \textbf{Case 1}: \textit{For each $n \in E$ there exists a compact set
		\[
		K_n \in \Vc(U_1,...,U_N) \cap \com{f}^{-n}(\Vc(U_1,...,U_N)) \cap \cdots \cap \com{f}^{-\ell n}(\Vc(U_1,...,U_N))
		\]
		such that $K_n \cap U_j \cap f^{-n}(U_j) \cap \cdots \cap f^{-\ell n}(U_j) \neq \varnothing$ for every $1\leq j\leq N$}. In this case it follows that $E \subset D$, and hence that $D \in \Ac$ by the hereditarily upward condition of $\Ac$, as we had to show.
		
		\item \textbf{Case 2}: \textit{There exists some $n_0 \in E$ such that for every compact set
		\begin{equation}\label{eq:K1}
			K \in \Vc(U_1,...,U_N) \cap \com{f}^{-n_0}(\Vc(U_1,...,U_N)) \cap \cdots \cap \com{f}^{-\ell n_0}(\Vc(U_1,...,U_N)),
		\end{equation}
		then there exists some $1\leq j_0\leq N$ (which depends on the compact set $K$ selected) for which}
		\begin{equation}\label{eq:K2}
			K \cap U_{j_0} \cap f^{-n_0}(U_{j_0}) \cap \cdots \cap f^{-\ell n_0}(U_{j_0}) = \varnothing.
		\end{equation}
		In this case fix $K$ and $j_0$ as described above and note that, by \eqref{eq:K1} and \eqref{eq:K2}, then there exists indexes $1\leq k_0\neq j_0 \leq N$ and $0\leq i_0\leq \ell$ such that $f^{i_0n_0}(K \cap U_{j_0}) \cap U_{k_0} \neq \varnothing$. Hence, the set $U_0 := U_{j_0} \cap f^{-i_0n_0}(U_{k_0})$ is a non-empty open subset of $X$. Using now that $f_{(N-1)}$ is topologically $(\ell,\Ac)$-recurrent, by the induction hypothesis, we have that
		\[
		F := \Nc_{f_{(N-1)}}^{\ell}\left( \prod_{0\leq j\leq N}^{k_0 \neq j \neq j_0} U_j \right) = \bigcap_{0\leq j\leq N}^{k_0 \neq j \neq j_0} \left\{ n \in \NN_0 \ ; \ \bigcap_{0\leq i\leq \ell} f^{-in}( U_j ) \neq \varnothing \right\} \in \Ac.
		\]
		Since $U_0 \subset U_{j_0}$ we also have that $F \subset \Nc_f^{\ell}(U_{j_0})$. Moreover, given any arbitrary but fixed integer $n \in F$ then we can select a point $x \in U_0 \cap f^{-n}(U_0) \cap \cdots \cap f^{-\ell n}(U_0)$, and considering $y := f^{i_0n_0}(x)$ we have that
		\[
		f^{in}(y) = f^{in}(f^{i_0n_0}(x)) = f^{i_0n_0}(f^{in}(x)) \in f^{i_0n_0}(U_0) \subset U_{k_0} \quad \text{ for every } 0\leq i\leq \ell.
		\]
		This implies that $y \in U_{k_0} \cap f^{-n}(U_{k_0}) \cap \cdots \cap f^{-\ell n}(U_{k_0})$ and hence that $n \in \Nc_f^{\ell}(U_{k_0})$. The arbitrariness of $n \in F$ shows that $F \subset \Nc_f^{\ell}(U_{k_0})$, and we deduce that
		\[
		F \subset \bigcap_{1\leq j\leq N} \Nc_f^{\ell}(U_j) = \bigcap_{1\leq j\leq N} \left\{ n \in \NN_0 \ ; \ \bigcap_{0\leq i\leq \ell} f^{-in}( U_j ) \neq \varnothing \right\} = D.
		\]
		By the hereditarily upward condition of $\Ac$ we have that $D \in \Ac$, as we had to show.\qedhere
	\end{enumerate}
\end{proof}

\begin{remark}\label{Rem:K.rec}
	The statement and proof of Theorem~\ref{The:K.rec} admit several remarks:
	\begin{enumerate}[(a)]
		\item Given any $N \in \NN$, the map $\com{f}_{(N)}:\Kc(X)^N\longrightarrow \Kc(X)^N$ included in statement (ii) of Theorem~\ref{The:K.rec} is not necessarily equal to the map
		\[
		\com{f_{(N)}}:\Kc(X^N)\longrightarrow \Kc(X^N),
		\]
		because the spaces $\Kc(X)^N$ and $\Kc(X^N)$ are not necessarily equal. Indeed, since the product of compact sets is again compact we always have the following natural (and easy to check) continuous injection from the product space $\Kc(X)^N$ to the hyperspace $\Kc(X^N)$:
		\[
		\iota:\Kc(X)^N\longrightarrow \Kc(X^N) \quad \text{ with } \iota\left( (K_1,K_2,...,K_N) \right) := K_1\times K_2\times\cdots\times K_N.
		\]
		However, the space $\Kc(X^N)$ can be much bigger than $\Kc(X)^N$: consider for instance $X=\RR$ with the usual topology and note that the closed ball $\{ (x,y) \in \RR^2 \ ; \ x^2+y^2=1 \}$ belongs to $\Kc(\RR^2)$ while it cannot be expressed as an element of $\Kc(\RR)^2$.
		
		\item Despite the previous comment, Theorem~\ref{The:K.rec} itself can be used to obtain (in its statement) the following two extra equivalent conditions:
		\begin{enumerate}
			\item[(ii')] {\em $\com{f_{(N)}}:\Kc(X^N)\longrightarrow \Kc(X^N)$ is topologically $(\ell,\Ac)$-recurrent for every $N \in \NN$};
			
			\item[(iii')] {\em $\com{f_{(N)}}:\Kc(X^N)\longrightarrow \Kc(X^N)$ is topologically $(\ell,\Ac)$-recurrent for some $N \in \NN$}.
		\end{enumerate}
		This follows by applying Theorem~\ref{The:K.rec} itself to the map $f_{(N)}$ for a particular $N \in \NN$ instead than applying it to $f$, and then by noticing that:
		\begin{enumerate}[--]
			\item we have the equality $(f_{(N)})_{(M)} = f_{(N \cdot M)}$ for every $M \in \NN$;
			
			\item if the map $f_{(M)}$ is topologically $(\ell,\Ac)$-recurrent for some $M \in \NN$, then the map $f_{(J)}$ is also topologically $(\ell,\Ac)$-recurrent for every $1\leq J\leq M$.
		\end{enumerate}
		
		\item If the Furstenberg family $\Ac \subset \Part(\NN_0)$ considered in the statement of Theorem~\ref{The:K.rec} is not a filter, then the $N$-fold direct product hypothesis for every $N \in \NN$ in statement (i) of Theorem~\ref{The:K.rec} is necessary. Indeed, for the case of usual topological recurrence (i.e.\ topological $(1,\Ac_{\infty})$-recurrence), it was shown in \cite[Example~4]{Banks1999} and \cite[Theorem~3.2]{GriLoPe2025_AMP} that given any $N \in \NN$ there exist many dynamical systems $(X,f)$ such that:
		\begin{enumerate}[--]
			\item the map $f_{(N)}:X^N\longrightarrow X^N$ is topologically recurrent (and even multiply recurrent);
			
			\item but such that the next product $f_{(N+1)}:X^{N+1}\longrightarrow X^{N+1}$ is not topologically recurrent.
		\end{enumerate}
	\end{enumerate}
\end{remark}

Using Remark~\ref{Rem:(l,A)-rec} we can now state some interesting consequences of Theorem~\ref{The:K.rec}. In particular, we obtain the ``topological recurrence'' analogue to the respective ``topological transitivity'' result independently obtained by Banks~\cite{Banks2005}, Liao et al.~\cite{LiWaZha2006} and Peris~\cite{Peris2005}:

\begin{corollary}\label{Cor:K.rec}
	Let $f:X\longrightarrow X$ be a continuous map on a topological space $X$. Then:
	\begin{enumerate}[{\em(a)}]
		\item The following statements are equivalent:
		\begin{enumerate}[{\em(i)}]
			\item $f_{(N)}:X^N\longrightarrow X^N$ is topologically recurrent for every $N \in \NN$;
			
			\item $\com{f}_{(N)}:\Kc(X)^N\longrightarrow \Kc(X)^N$ is topologically recurrent for every $N \in \NN$;
			
			\item $\com{f}:\Kc(X)\longrightarrow \Kc(X)$ is topologically recurrent.
		\end{enumerate}
	
		\item The following statements are equivalent:
		\begin{enumerate}[{\em(i)}]
			\item $f_{(N)}:X^N\longrightarrow X^N$ is multiply recurrent for every $N \in \NN$;
			
			\item $\com{f}_{(N)}:\Kc(X)^N\longrightarrow \Kc(X)^N$ is multiply recurrent for every $N \in \NN$;
			
			\item $\com{f}:\Kc(X)\longrightarrow \Kc(X)$ is multiply recurrent.
		\end{enumerate}
	
		\item Given any Furstenberg family $\Ac \subset \Part(\NN_0)$, the following statements are equivalent:
		\begin{enumerate}[{\em(i)}]
			\item $f_{(N)}:X^N\longrightarrow X^N$ is topologically $\Ac$-recurrent for every $N \in \NN$;
			
			\item $\com{f}_{(N)}:\Kc(X)^N\longrightarrow \Kc(X)^N$ is topologically $\Ac$-recurrent for every $N \in \NN$;
			
			\item $\com{f}:\Kc(X)\longrightarrow \Kc(X)$ is topologically $\Ac$-recurrent.
		\end{enumerate}
	\end{enumerate}
\end{corollary}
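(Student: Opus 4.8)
The plan is to deduce all three equivalences directly from Theorem~\ref{The:K.rec}, specializing the parameters $\ell$ and $\Ac$ according to the dictionary recorded in Remark~\ref{Rem:(l,A)-rec}. No new construction is needed: the induced product maps $\com{f}_{(N)}:\Kc(X)^N\longrightarrow\Kc(X)^N$ appearing in statement (ii) of each part of the corollary are literally the maps $(\com{f})_{(N)}$ treated in statement (ii) of the theorem, so the matter is purely one of translating terminology.

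For part (a), I would simply invoke Theorem~\ref{The:K.rec} with $\ell=1$ and $\Ac=\Ac_{\infty}$. By the first bullet of Remark~\ref{Rem:(l,A)-rec}, topological recurrence of a map $g$ is exactly its topological $(1,\Ac_{\infty})$-recurrence; applying this reading to $g=f_{(N)}$, to $g=\com{f}_{(N)}$ and to $g=\com{f}$ converts the three statements of the theorem verbatim into the three statements of part (a). Part (c) is handled identically, now keeping $\ell=1$ but using the arbitrary family $\Ac$ given in the hypothesis, since (third bullet of Remark~\ref{Rem:(l,A)-rec}) topological $\Ac$-recurrence coincides with topological $(1,\Ac)$-recurrence.

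The only part requiring a little care is (b), because multiple recurrence is not a single $(\ell,\Ac)$-recurrence condition but rather, by the second bullet of Remark~\ref{Rem:(l,A)-rec}, the conjunction over all $\ell\in\NN$ of the $(\ell,\Ac_{\infty})$-recurrence conditions. Here I would fix $\ell\in\NN$ and apply Theorem~\ref{The:K.rec} with this $\ell$ and $\Ac=\Ac_{\infty}$, obtaining for each individual $\ell$ the equivalence of the corresponding three $(\ell,\Ac_{\infty})$-recurrence statements. Since each of the statements (i)--(iii) of part (b) asserts, by definition, that the relevant $(\ell,\Ac_{\infty})$-recurrence holds \emph{for every} $\ell$, I would then take the conjunction over all $\ell$; the universal quantifier on $\ell$ can be pulled outside each biconditional precisely because the theorem is proved for each fixed $\ell$ separately, so the equivalence holds uniformly in $\ell$ before quantification. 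Conjoining over $\ell$ yields the desired equivalence among (i), (ii) and (iii). I do not expect any genuine obstacle: all the dynamical content resides in Theorem~\ref{The:K.rec}, and the role of the corollary is only to repackage it through Definition~\ref{Def:(l,A)-rec}, the one point worth stating explicitly being the harmless commutation of the ``for every $\ell$'' quantifier with the theorem's biconditional in the multiple-recurrence case.
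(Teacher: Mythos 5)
Your proposal is correct and takes essentially the same route as the paper, whose entire proof of this corollary is the one-line remark that it follows directly from Remark~\ref{Rem:(l,A)-rec} and Theorem~\ref{The:K.rec}. Your explicit treatment of part (b) --- fixing $\ell$, applying the theorem with $\Ac=\Ac_{\infty}$, and then conjoining the resulting biconditionals over all $\ell\in\NN$ --- is exactly the bookkeeping the paper leaves implicit, and it is valid.
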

\begin{proof}
	This is a direct consequence of Remark~\ref{Rem:(l,A)-rec} and Theorem~\ref{The:K.rec}.
\end{proof}

In the next section we will see that Theorem~\ref{The:K.rec} is also very useful to study the recurrence-kind properties of the fuzzy system $(\Fc(X),\fuz{f})$.

\section[Recurrence in F(X)]{Recurrence in $\Fc(X)$}\label{Sec_4:F(X)}

To consider $\Fc(X)$ we will assume that $X$ is a metric space as argued in Section~\ref{Sec_2:notation}. The following general result can be compared with Theorem~\ref{The:K.rec} and will directly imply the particular cases of topological recurrence, multiple recurrence and topological $\Ac$-recurrence (see Corollary~\ref{Cor:F.rec} below).

\begin{theorem}\label{The:F.rec}
	Let $\ell \in \NN$ be a positive integer and let $\Ac \subset \Part(\NN_0)$ be a Furstenberg family. Then, given a continuous map $f:X\longrightarrow X$ on a metric space $(X,d)$, the following statements are equivalent:
	\begin{enumerate}[{\em(i)}]
		\item $f_{(N)}:X^N\longrightarrow X^N$ is topologically $(\ell,\Ac)$-recurrent for every $N \in \NN$;
		
		\item $\com{f}:\Kc(X)\longrightarrow \Kc(X)$ is topologically $(\ell,\Ac)$-recurrent;
		
		\item $\fuz{f}:\Fc_{\infty}(X)\longrightarrow \Fc_{\infty}(X)$ is topologically $(\ell,\Ac)$-recurrent;
		
		\item $\fuz{f}:\Fc_{0}(X)\longrightarrow \Fc_{0}(X)$ is topologically $(\ell,\Ac)$-recurrent.
	\end{enumerate}
\end{theorem}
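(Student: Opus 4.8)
The plan is to establish the cycle of implications (i) $\Leftrightarrow$ (ii) $\Rightarrow$ (iii) $\Rightarrow$ (iv) $\Rightarrow$ (ii), which, together with (ii) $\Rightarrow$ (i), yields all four equivalences. The equivalence (i) $\Leftrightarrow$ (ii) is immediate from Theorem~\ref{The:K.rec} (its statements (i) and (iii)), since a metric space is in particular a topological space; crucially, that theorem also grants the a priori stronger fact that every product map $\com{f}_{(N)}:\Kc(X)^N\longrightarrow\Kc(X)^N$ is topologically $(\ell,\Ac)$-recurrent, which I will use below. The implication (iii) $\Rightarrow$ (iv) is purely topological: by Proposition~\ref{Pro:Zadeh}(d) we have $d_{0}\leq d_{\infty}$, so $\Bc_{\infty}(u,\eps)\subseteq\Bc_{0}(u,\eps)$ for every $u$ and $\eps$; hence each $d_{0}$-ball contains the concentric $d_{\infty}$-ball of the same radius, every $d_{0}$-open subset of $\Fc(X)$ is also $d_{\infty}$-open, and recurrence for all $d_{\infty}$-open sets trivially descends to the smaller class of $d_{0}$-open sets (the return sets $\Nc_{\fuz{f}}^{\ell}(\mathcal U)$ being defined set-theoretically, independently of the topology).

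For (iv) $\Rightarrow$ (ii) I would use the embedding $K\mapsto\chi_K$ of $\Kc(X)$ into $\Fc(X)$. Given a non-empty open set $\mathcal K\subseteq\Kc(X)$ and $K\in\mathcal K$ with $\Bc_{H}(K,\eps)\subseteq\mathcal K$, I apply (iv) to the $d_{0}$-ball $\Bc_{0}(\chi_K,\eps)$, obtaining $\Nc_{\fuz{f}}^{\ell}(\Bc_{0}(\chi_K,\eps))\in\Ac$. For each $n$ in this return set there is $v\in\Fc(X)$ with $d_{0}(\fuz{f}^{pn}(v),\chi_K)<\eps$ for all $0\leq p\leq\ell$; since $d_{0}(\cdot,\chi_K)=d_{\infty}(\cdot,\chi_K)$ by Proposition~\ref{Pro:Zadeh}(d), comparing the $\alpha=1$ levels and using Proposition~\ref{Pro:Zadeh}(a)--(b) gives $d_{H}(f^{pn}(v_1),K)<\eps$. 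Thus the non-empty compact set $L:=v_1$ satisfies $\com{f}^{pn}(L)\in\Bc_{H}(K,\eps)\subseteq\mathcal K$ for every $p$, so $n\in\Nc_{\com{f}}^{\ell}(\mathcal K)$. This yields $\Nc_{\fuz{f}}^{\ell}(\Bc_{0}(\chi_K,\eps))\subseteq\Nc_{\com{f}}^{\ell}(\mathcal K)$, and the hereditarily upward property of $\Ac$ closes the implication.

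The heart of the argument, and the step I expect to be the main obstacle, is (ii) $\Rightarrow$ (iii). Given a non-empty open $\mathcal U\subseteq\Fc_{\infty}(X)$ and $u\in\mathcal U$ with $\Bc_{\infty}(u,\eps)\subseteq\mathcal U$, I would first invoke Lemma~\ref{Lem:eps.pisos} with radius $\eps/3$ to produce levels $0=\alpha_0<\alpha_1<\cdots<\alpha_N=1$ for which every $u_\alpha$ is $(\eps/3)$-close in $d_{H}$ to the relevant $u_{\alpha_i}$, including $d_{H}(u_0,u_{\alpha_1})\leq\eps/3$. The nested compact sets $u_{\alpha_1}\supseteq\cdots\supseteq u_{\alpha_N}$ are the data I must reproduce dynamically. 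Applying the product recurrence of $\com{f}_{(N)}$ (available from Theorem~\ref{The:K.rec}) to the Vietoris-product ball $\prod_{i=1}^{N}\Bc_{H}(u_{\alpha_i},\delta)$ gives, for each $n$ in a set belonging to $\Ac$, compact sets $K_1,\dots,K_N$ with $d_{H}(f^{pn}(K_i),u_{\alpha_i})<\delta$ for all $0\leq p\leq\ell$ and $1\leq i\leq N$. The genuine difficulty is that these $K_i$ need not be nested, so they do not directly assemble into a fuzzy set.

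I would repair this by replacing each $K_i$ with $\widetilde K_i:=K_i\cup K_{i+1}\cup\cdots\cup K_N$, which are nested by construction. Since the nestedness of the $u_{\alpha_j}$ gives $u_{\alpha_i}=u_{\alpha_i}\cup\cdots\cup u_{\alpha_N}$, iterating Proposition~\ref{Pro:Hausdorff} preserves the estimate $d_{H}(f^{pn}(\widetilde K_i),u_{\alpha_i})<\delta$ for all $p$ and $i$. I then form the staircase fuzzy set $v$ defined by $v(x):=\max\{\alpha_i : x\in\widetilde K_i\}$ (and $v(x):=0$ otherwise), whose super-level sets are $v_\alpha=\widetilde K_i$ on $]\alpha_{i-1},\alpha_i]$ and $v_0=\widetilde K_1$, so that $v\in\Fc(X)$ with $v_1=\widetilde K_N\neq\varnothing$. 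Using $[\fuz{f}^{pn}(v)]_\alpha=f^{pn}(v_\alpha)$ from Proposition~\ref{Pro:Zadeh}(a)--(b), together with the triangle inequality and the two families of level estimates, a routine $\eps$--$\delta$ bookkeeping with $\delta:=\eps/3$ yields $d_{\infty}(\fuz{f}^{pn}(v),u)<\eps$ for all $0\leq p\leq\ell$. Hence $v$ witnesses $n\in\Nc_{\fuz{f}}^{\ell}(\mathcal U)$, giving $\Nc_{\com{f}_{(N)}}^{\ell}(\prod_{i}\Bc_{H}(u_{\alpha_i},\delta))\subseteq\Nc_{\fuz{f}}^{\ell}(\mathcal U)$ and therefore $\Nc_{\fuz{f}}^{\ell}(\mathcal U)\in\Ac$. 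The only delicate points are the union trick, which secures nestedness while preserving the Hausdorff estimates via Proposition~\ref{Pro:Hausdorff}, and the treatment of the $\alpha=0$ level, handled by the last assertion of Lemma~\ref{Lem:eps.pisos}.
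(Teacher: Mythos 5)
Your proposal is correct and follows essentially the same route as the paper: (i) $\Leftrightarrow$ (ii) via Theorem~\ref{The:K.rec}, (ii) $\Rightarrow$ (iii) via Lemma~\ref{Lem:eps.pisos} plus product recurrence of $\com{f}_{(N)}$ and the staircase fuzzy set, (iii) $\Rightarrow$ (iv) by comparison of the metrics, and (iv) $\Rightarrow$ (ii) via characteristic functions and the level-$1$ sets. Your explicit ``union trick'' $\widetilde K_i := K_i\cup\cdots\cup K_N$ is exactly what the paper's definition $v:=\max_{1\leq i\leq N}(\alpha_i\cdot\chi_{K_i})$ produces implicitly (its $\alpha_i$-level sets are precisely $\bigcup_{l\geq i}K_l$), so the two constructions coincide up to the choice of $\eps/3$ versus $\eps/2$.
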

\begin{proof}
	(i) $\Leftrightarrow$ (ii): This equivalence is showed in Theorem~\ref{The:K.rec}.
	
	(ii) $\Rightarrow$ (iii): Fix $u \in \Fc(X)$ and $\eps>0$. We must show that the set
	\[
	A := \Nc_{\fuz{f}}^{\ell}\left( \Bc_{\infty}(u,\eps) \right) = \left\{ n \in \NN_0 \ ; \ \bigcap_{0\leq j\leq \ell} \fuz{f}^{-jn}\left( \Bc_{\infty}(u,\eps) \right) \neq \varnothing \right\}
	\]
	belongs to $\Ac$. By Lemma~\ref{Lem:eps.pisos} there exist numbers $0 = \alpha_0 < \alpha_1 < \alpha_2 < ... < \alpha_N = 1$ such that
	\begin{equation}\label{eq:eps.pisos}
		d_H(u_{\alpha}, u_{\alpha_{i+1}})<\tfrac{\eps}{2} \text{ for each } \alpha \in \ ]\alpha_i, \alpha_{i+1}] \text{ with } 1\leq i\leq N-1 \quad \text{ and also } \quad d_H(u_0,u_{\alpha_1})\leq\tfrac{\eps}{2}.
	\end{equation}
	By Theorem~\ref{The:K.rec}, the dynamical system $(\Kc(X)^N,\com{f}_{(N)})$ is topologically $(\ell,\Ac)$-recurrent so that
	\[
	B := \Nc_{\com{f}_{(N)}}^{\ell}\left( \prod_{1\leq i\leq N} \Bc_H(u_{\alpha_i},\tfrac{\eps}{2}) \right) = \bigcap_{1\leq i\leq N} \left\{ n \in \NN_0 \ ; \ \bigcap_{0\leq j\leq \ell} \com{f}^{-jn}\left( \Bc_H(u_{\alpha_i},\tfrac{\eps}{2}) \right) \right\} \in \Ac.
	\]
	Thus, given any arbitrary but fixed $n \in B$ there exist compact sets $K_1,...,K_N \in \Kc(X)$ such that $K_i \in \bigcap_{0\leq j\leq \ell} \com{f}^{-jn}\left( \Bc_H(u_{\alpha_i},\tfrac{\eps}{2}) \right)$ for each $1\leq i\leq N$, and in particular we have that
	\begin{equation}\label{eq:d_H.f^{jn}(K_i)}
		\max_{0\leq j\leq \ell} \left\{ d_H\left(u_{\alpha_i},f^{jn}(K_i)\right) \ ; \ 1\leq i\leq N \right\} < \tfrac{\eps}{2}.
	\end{equation}
	Consider now the upper-semicontinuous function
	\[
	v := \max_{1\leq i\leq N} \left( \alpha_i \cdot \chi_{K_i} \right) : X \longrightarrow [0,1],
	\]
	which is a fuzzy set fulfilling that $v_{\alpha} = \bigcup \left\{ K_i \ ; \ 1\leq i\leq N \text{ with } \alpha_i \geq \alpha \right\}$ for each $\alpha \in [0,1]$, and in particular
	\[
	v_{\alpha_i} = \bigcup_{l \geq i} K_l \quad \text{ for each } 1\leq i\leq N.
	\]
	It follows that $v$ is a normal fuzzy set, i.e.\ $v \in \Fc(X)$, and by Proposition~\ref{Pro:Hausdorff} we also have that
	\begin{align}\label{eq:d_H.f^{jn}(v_i)}
		d_H\left(u_{\alpha_i},f^{jn}(v_{\alpha_i})\right) &= d_H\left( \bigcup_{l\geq i} u_{\alpha_l}, f^{jn}\left( \bigcup_{l\geq i} K_l \right) \right) = d_H\left( \bigcup_{l\geq i} u_{\alpha_l}, \bigcup_{l\geq i} f^{jn}(K_l)\right) \\[7.5pt]
		&\leq \max_{0\leq j\leq \ell} \left\{ d_H\left( u_{\alpha_l} , f^{jn}(K_l) \right) \ ; \ 1\leq l\leq N \right\} \overset{\eqref{eq:d_H.f^{jn}(K_i)}}{<} \frac{\eps}{2} \quad \text{ for all } 1\leq i\leq N \text{ and } 0\leq j\leq \ell. \nonumber
	\end{align}
	Moreover, given any $0\leq j\leq \ell$ and any $\alpha \in [0,1]$ we have that
	\begin{align*}
		d_H\left( u_{\alpha} , \left[\fuz{f}^{jn}(v)\right]_{\alpha} \right) &= d_H\left( u_{\alpha} , f^{jn}(v_{\alpha}) \right)
		= \left\{
		\begin{array}{lcc}
			d_H\left( u_{\alpha} , f^{jn}(v_{\alpha_1}) \right), & \text{ if } \alpha \in [0,\alpha_1] \hspace{2.95cm} \\[5pt]
			d_H\left( u_{\alpha} , f^{jn}(v_{\alpha_{i+1}}) \right), & \text{ if } \alpha \in ]\alpha_i,\alpha_{i+1}], 1\leq i\leq N-1
		\end{array}
		\right\} \\[7.5pt]
		&\leq \left\{
		\begin{array}{lcc}
			d_H\left( u_{\alpha} , u_{\alpha_1} \right) + d_H\left( u_{\alpha_1} , f^{jn}(v_{\alpha_1}) \right), & \text{ if } \alpha \in [0,\alpha_1] \hspace{2.95cm} \\[5pt]
			d_H\left( u_{\alpha} , u_{\alpha_{i+1}} \right) + d_H\left( u_{\alpha_{i+1}} , f^{jn}(v_{\alpha_{i+1}}) \right), & \text{ if } \alpha \in ]\alpha_i,\alpha_{i+1}], 1\leq i\leq N-1
		\end{array}
		\right\} \\[7.5pt]
		&\overset{\eqref{eq:eps.pisos}}{\leq} \tfrac{\eps}{2} + \max_{0\leq j\leq \ell} \left\{ d_H\left(u_{\alpha_i},f^{jn}(v_{\alpha_i})\right) \ ; \ 1\leq i\leq N \right\} \overset{\eqref{eq:d_H.f^{jn}(v_i)}}{<} \eps,
	\end{align*}
	which finally implies that $v \in \Bc_{\infty}(u,\eps) \cap \fuz{f}^{-n}\left( \Bc_{\infty}(u,\eps) \right) \cap \cdots \cap \fuz{f}^{-\ell n}\left( \Bc_{\infty}(u,\eps) \right)$. We deduce that $n \in A$ and, since $n \in B$ was arbitrary, we have that $B \subset A$ and hence that $A \in \Ac$ as we had to show.
	
	(iii) $\Rightarrow$ (iv): Trivial since the topology induced by $d_{\infty}$ in $\Fc(X)$ is finer than that induced by $d_{0}$.
	
	(iv) $\Rightarrow$ (ii): Fix $K \in \Kc(X)$ and $\eps>0$. We must show that the set
	\[
	C := \Nc_{\com{f}}^{\ell}\left( \Bc_H(K,\eps) \right) = \left\{ n \in \NN_0 \ ; \ \bigcap_{0\leq j\leq \ell} \com{f}^{-jn}\left( \Bc_H(K,\eps) \right) \neq \varnothing \right\}
	\]
	belongs to $\Ac$. Define $u := \chi_K$, which belongs to $\Fc(X)$, and since the dynamical system $(\Fc_{0}(X),\fuz{f})$ is assumed to be topologically $(\ell,\Ac)$-recurrent we have that
	\[
	D := \Nc_{\fuz{f}}^{\ell}\left( \Bc_{0}(u,\eps) \right) = \left\{ n \in \NN_0 \ ; \ \bigcap_{0\leq j\leq \ell} \fuz{f}^{-jn}\left( \Bc_{0}(u,\eps) \right) \neq \varnothing \right\} \in \Ac.
	\]
	Thus, given any arbitrary but fixed $n \in D$ there exists a normal fuzzy set $v \in \bigcap_{0\leq j\leq \ell} \fuz{f}^{-jn}\left( \Bc_{0}(u,\eps) \right)$, and in particular we have by Proposition~\ref{Pro:Zadeh} that
	\[
	d_{\infty}\left( u , \fuz{f}^{jn}(v) \right) = d_{\infty}\left( \chi_K , \fuz{f}^{jn}(v) \right) = d_{0}\left( \chi_K , \fuz{f}^{jn}(v) \right) < \eps \quad \text{ for all } 0\leq j\leq \ell,
	\]
	which implies that
	\[
	d_H\left( K , f^{jn}(v_1) \right) = d_H\left( K , [\fuz{f}^{jn}(v)]_1 \right) \leq d_{\infty}\left( u , \fuz{f}^{jn}(v) \right) < \eps  \quad \text{ for all } 0\leq j\leq \ell.
	\]
	Thus, the compact set $L:=v_1 \in \Kc(X)$ fulfills that
	\[
	L \in \Bc_H(K,\eps) \cap \com{f}^{-n}\left( \Bc_H(K,\eps) \right) \cap \cdots \cap \com{f}^{-\ell n}\left( \Bc_H(K,\eps) \right),
	\]
	so that $n \in C$. The arbitrariness of $n \in D$ now implies that $D \subset C$ and by the hereditarily upward condition of $\Ac$ we deduce that $C \in \Ac$, as we had to show.
\end{proof}

Again by Remark~\ref{Rem:(l,A)-rec}, and as we did in Corollary~\ref{Cor:K.rec}, we can get interesting recurrence-kind consequences of Theorem~\ref{The:F.rec}. In particular, we obtain the ``topological recurrence'' analogue to the respective ``topological transitivity'' result recently obtained in \cite{JarSanSan2020b}:

\begin{corollary}\label{Cor:F.rec}
	Let $f:X\longrightarrow X$ be a continuous map on a metric space $(X,d)$. Then:
	\begin{enumerate}[{\em(a)}]
		\item The following statements are equivalent:
		\begin{enumerate}[{\em(i)}]
			\item $f_{(N)}:X^N\longrightarrow X^N$ is topologically recurrent for every $N \in \NN$;
			
			\item $\com{f}:\Kc(X)\longrightarrow \Kc(X)$ is topologically recurrent;
			
			\item $\fuz{f}:\Fc_{\infty}(X)\longrightarrow \Fc_{\infty}(X)$ is topologically recurrent;
			
			\item $\fuz{f}:\Fc_{0}(X)\longrightarrow \Fc_{0}(X)$ is topologically recurrent.
		\end{enumerate}
		
		\item The following statements are equivalent:
		\begin{enumerate}[{\em(i)}]
			\item $f_{(N)}:X^N\longrightarrow X^N$ is multiply recurrent for every $N \in \NN$;
			
			\item $\com{f}:\Kc(X)\longrightarrow \Kc(X)$ is multiply recurrent;
			
			\item $\fuz{f}:\Fc_{\infty}(X)\longrightarrow \Fc_{\infty}(X)$ is multiply recurrent;
			
			\item $\fuz{f}:\Fc_{0}(X)\longrightarrow \Fc_{0}(X)$ is multiply recurrent.
		\end{enumerate}
		
		\item Given any Furstenberg family $\Ac \subset \Part(\NN_0)$, the following statements are equivalent:
		\begin{enumerate}[{\em(i)}]
			\item $f_{(N)}:X^N\longrightarrow X^N$ is topologically $\Ac$-recurrent for every $N \in \NN$;
			
			\item $\com{f}:\Kc(X)\longrightarrow \Kc(X)$ is topologically $\Ac$-recurrent;
			
			\item $\fuz{f}:\Fc_{\infty}(X)\longrightarrow \Fc_{\infty}(X)$ is topologically $\Ac$-recurrent;
			
			\item $\fuz{f}:\Fc_{0}(X)\longrightarrow \Fc_{0}(X)$ is topologically $\Ac$-recurrent.
		\end{enumerate}
	\end{enumerate}
\end{corollary}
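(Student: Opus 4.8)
The plan is to derive all three equivalences directly from Theorem~\ref{The:F.rec} by feeding it the appropriate pairs $(\ell,\Ac)$, exactly as prescribed by the dictionary in Remark~\ref{Rem:(l,A)-rec}. Since each of the three recurrence notions appearing in (a), (b) and (c) is, by that remark, an instance (or a conjunction of instances) of topological $(\ell,\Ac)$-recurrence, no new dynamical argument is needed; the content of the corollary is entirely contained in the theorem, and the proof is a bookkeeping reduction.

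For part (a) I would specialize Theorem~\ref{The:F.rec} to $\ell=1$ and $\Ac=\Ac_{\infty}$, the family of infinite subsets of $\NN_0$. By the first item of Remark~\ref{Rem:(l,A)-rec}, topological recurrence is precisely topological $(1,\Ac_{\infty})$-recurrence, so for this choice the four statements (i)--(iv) of Theorem~\ref{The:F.rec} read verbatim as the four statements (i)--(iv) of part (a), and their equivalence is immediate. Part (c) is handled in the same way: fixing $\ell=1$ and the prescribed family $\Ac$, the third item of Remark~\ref{Rem:(l,A)-rec} identifies topological $\Ac$-recurrence with topological $(1,\Ac)$-recurrence, so that Theorem~\ref{The:F.rec} applied to the pair $(1,\Ac)$ is exactly the assertion of part (c).

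The only step requiring a moment's care is part (b), where multiple recurrence is not a single instance of $(\ell,\Ac)$-recurrence but, by the second item of Remark~\ref{Rem:(l,A)-rec}, the conjunction over all $\ell\in\NN$ of topological $(\ell,\Ac_{\infty})$-recurrence. Here I would apply Theorem~\ref{The:F.rec} with $\Ac=\Ac_{\infty}$ and each fixed $\ell\in\NN$ separately, obtaining for every $\ell$ the equivalence of ``$f_{(N)}$ is topologically $(\ell,\Ac_{\infty})$-recurrent for every $N$'', ``$\com{f}$ is topologically $(\ell,\Ac_{\infty})$-recurrent'', and the two fuzzy analogues on $\Fc_{\infty}(X)$ and $\Fc_{0}(X)$. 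Since each of the four statements in part (b) is exactly the quantification over all $\ell\in\NN$ of the corresponding statement of Theorem~\ref{The:F.rec}, and since these equivalences hold uniformly for each individual $\ell$, taking the conjunction over $\ell\in\NN$ on both sides preserves the equivalence, yielding part (b).

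I do not anticipate a genuine obstacle, as the whole corollary reduces to Theorem~\ref{The:F.rec}; the single point at which one must pause is the conjunction-over-$\ell$ interchange in part (b), where it should be checked that the per-$\ell$ equivalences combine correctly into an equivalence between the two universally quantified statements.
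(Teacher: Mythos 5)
Your proposal is correct and follows exactly the paper's own proof, which simply cites Remark~\ref{Rem:(l,A)-rec} and Theorem~\ref{The:F.rec}: parts (a) and (c) are the instances $(\ell,\Ac)=(1,\Ac_{\infty})$ and $(1,\Ac)$, while part (b) is the conjunction over all $\ell\in\NN$ of the instances $(\ell,\Ac_{\infty})$, and your check that the per-$\ell$ equivalences combine under the universal quantifier is the (routine) logical step the paper leaves implicit.
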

\begin{proof}
	This is a direct consequence of Remark~\ref{Rem:(l,A)-rec} and Theorem~\ref{The:F.rec}.
\end{proof}

As mentioned in Remark~\ref{Rem:K.rec} but this time for Corollary~\ref{Cor:F.rec}, this result is not true for the notion of {\em topological recurrence} (neither for {\em multiple recurrence}) if we do not assume for the continuous map~$f$ the $N$-fold direct product hypothesis for every $N \in \NN$. Indeed, the already mentioned dynamical system $f:X\longrightarrow X$ constructed in \cite[Theorem~3.2]{GriLoPe2025_AMP} for each $N \in \NN$, acts on a metric space $X$ and fulfills that $f_{(N)}$ is topologically recurrent (and even multiply recurrent), but also that $f_{(N+1)}$ is not topologically recurrent (neither multiply recurrent).

\section{The case of complete metric spaces}\label{Sec_5:complete}

In this section we assume that $X$ is a complete metric space. This assumption is usually fulfilled by the phase spaces considered in the literature ($\RR^N$ or $\CC^N$, compact metric spaces, etc.), and it allows us to get stronger results than Corollary~\ref{Cor:F.rec} by using {\em Baire category arguments}. Let us start by recalling which kind of point-recurrence notions we are going to consider.

Given a dynamical system $(X,f)$ we say that a point $x \in X$ is:
\begin{enumerate}[--]
	\item {\em recurrent for $f$}, if for every neighbourhood $U$ of $x$ there exists some (and hence infinitely many) $n \in \NN$ such that $f^n(x) \in U$, i.e.\ if the {\em return set from $x$ to $U$}, which will be denoted by
	\[
	\Nc_f(x,U) := \{ n \in \NN_0 \ ; \ f^n(x) \in U \},
	\]
	is an infinite set. We will denote by $\Rec(f)$ the {\em set of recurrent points for $f$}, and the system $(X,f)$ is called {\em point-recurrent} if the set $\Rec(f)$ is dense in $X$.
	
	\item {\em $\AP$-recurrent for $f$}, if for every neighbourhood $U$ of $x$ the return set $\Nc_f(x,U)$ contains arbitrarily long arithmetic progressions, i.e.\ if for each positive integer $\ell \in \NN$ there exist $n \in \NN$ and $n_0 \in \NN_0$ such that $\{ n_0+jn \ ; \ 0\leq j\leq \ell \} \subset \Nc_f(x,U)$. We will denote by $\AP\Rec(f)$ the {\em set of $\AP$-recurrent points for $f$}, and the system $(X,f)$ is called {\em $\AP$-recurrent} if the set $\AP\Rec(f)$ is dense in $X$.
\end{enumerate}

It is not hard to show that every {\em point-recurrent} dynamical system is {\em topologically recurrent}, and that every {\em $\AP$-recurrent} system is {\em multiply recurrent}. The converse implications are not true in general (see for instance \cite[Example~12.9]{GrPe2011_book}), but hold when~$X$ is a complete metric space by using some {\em Baire category arguments} (see \cite[Proposition~2.1]{CoMaPa2014} and \cite[Lemma~4.8]{KwiLiOpYe2017}). Thus, we have the next:

\begin{corollary}\label{Cor:point.rec}
	Let $f:X\longrightarrow X$ be a continuous map on a complete metric space $(X,d)$. Then:
	\begin{enumerate}[{\em(a)}]
		\item The following statements are equivalent:
		\begin{enumerate}[{\em(i)}]
			\item $f_{(N)}:X^N\longrightarrow X^N$ is point-recurrent for every $N \in \NN$;
			
			\item $\com{f}:\Kc(X)\longrightarrow \Kc(X)$ is point-recurrent;
			
			\item $\fuz{f}:\Fc_{\infty}(X)\longrightarrow \Fc_{\infty}(X)$ is point-recurrent;
			
			\item $\fuz{f}:\Fc_{0}(X)\longrightarrow \Fc_{0}(X)$ is point-recurrent.
		\end{enumerate}
		
		\item The following statements are equivalent:
		\begin{enumerate}[{\em(i)}]
			\item $f_{(N)}:X^N\longrightarrow X^N$ is $\AP$-recurrent for every $N \in \NN$;
			
			\item $\com{f}:\Kc(X)\longrightarrow \Kc(X)$ is $\AP$-recurrent;
			
			\item $\fuz{f}:\Fc_{\infty}(X)\longrightarrow \Fc_{\infty}(X)$ is $\AP$-recurrent;
			
			\item $\fuz{f}:\Fc_{0}(X)\longrightarrow \Fc_{0}(X)$ is $\AP$-recurrent.
		\end{enumerate}
	\end{enumerate}
\end{corollary}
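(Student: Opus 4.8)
The plan is to deduce this corollary from Corollary~\ref{Cor:F.rec} by means of the two Baire-category equivalences that hold on any complete metric space: \emph{topological recurrence} coincides with \emph{point-recurrence}, and \emph{multiple recurrence} coincides with \emph{$\AP$-recurrence}. First I would dispose of the trivial implications already noted before the statement, namely that every point-recurrent system is topologically recurrent and every $\AP$-recurrent system is multiply recurrent, with no completeness assumption needed. The substance lies in the converses.

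On a complete metric space $Y$ with continuous $g:Y\to Y$ one writes
\[
\Rec(g) = \bigcap_{k\geq 1} \bigcup_{n\geq 1} \left\{ y \in Y \ ; \ d(g^n(y),y) < \tfrac{1}{k} \right\},
\]
which is a $G_{\delta}$ set by continuity of $g$. If $g$ is topologically recurrent, then each inner union is dense: shrinking a given non-empty open set to a ball $U$ of radius $<\tfrac{1}{2k}$ and using $g^n(U)\cap U\neq\varnothing$ produces a point $y\in U$ with $d(g^n(y),y)<\tfrac{1}{k}$. Baire's theorem then makes $\Rec(g)$ dense. The same computation, now with
\[
\bigcap_{\ell,k\geq 1} \bigcup_{n\geq 1} \left\{ y \in Y \ ; \ \max_{1\leq j\leq \ell} d(g^{jn}(y),y) < \tfrac{1}{k} \right\} \subseteq \AP\Rec(g)
\]
and multiple recurrence in place of topological recurrence, gives density of $\AP\Rec(g)$. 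I would simply cite \cite[Proposition~2.1]{CoMaPa2014} and \cite[Lemma~4.8]{KwiLiOpYe2017} for these facts; crucially, the argument uses only the Baire property and \emph{not} separability, which matters because $\Fc_{\infty}(X)$ is non-separable.

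The main obstacle is then purely topological: verifying that all four phase spaces are complete (equivalently, Baire). For $X^N$ this is immediate, and $(\Kc(X),d_H)$ is complete whenever $X$ is; the delicate cases are $\Fc_{\infty}(X)$ and $\Fc_{0}(X)$, for which I would invoke the completeness (complete metrizability) results for these fuzzy hyperspaces over complete $X$ available in the literature. Here it is worth noting that even if the Skorokhod metric $d_0$ need not be literally complete as written, $\Fc_{0}(X)$ is completely metrizable and hence a Baire space, which is all that the category argument requires.

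Finally I would chain the equivalences. For part (a): applying the Baire equivalence on each $X^N$ shows that statement (i) is equivalent to ``$f_{(N)}$ is topologically recurrent for every $N \in \NN$''; by Corollary~\ref{Cor:F.rec}(a) the latter is in turn equivalent to ``$\com{f}$ is topologically recurrent'', to ``$\fuz{f}$ is topologically recurrent on $\Fc_{\infty}(X)$'', and to ``$\fuz{f}$ is topologically recurrent on $\Fc_{0}(X)$''; and applying the Baire equivalence on $\Kc(X)$, $\Fc_{\infty}(X)$ and $\Fc_{0}(X)$ converts these three conditions into (ii), (iii) and (iv) respectively. Part (b) is verbatim the same, with multiple recurrence replacing topological recurrence, $\AP$-recurrence replacing point-recurrence, and Corollary~\ref{Cor:F.rec}(b) replacing Corollary~\ref{Cor:F.rec}(a).
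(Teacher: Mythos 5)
Your proposal is correct and follows essentially the same route as the paper: both reduce the corollary to Corollary~\ref{Cor:F.rec} via the Baire-category equivalences (point-recurrence $\Leftrightarrow$ topological recurrence, $\AP$-recurrence $\Leftrightarrow$ multiple recurrence) on complete spaces, and both handle the completeness of $(\Kc(X),d_H)$ and $\Fc_{\infty}(X)$ and the complete metrizability of $\Fc_{0}(X)$ by an equivalent metric in the same way. The only difference is that you sketch the $G_{\delta}$ Baire argument explicitly, whereas the paper simply cites \cite[Proposition~2.1]{CoMaPa2014} and \cite[Lemma~4.8]{KwiLiOpYe2017}.
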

\begin{proof}
	If the metric space $(X,d)$ is complete then we have: that the metric space $(\Kc(X),d_H)$ is complete (see for instance \cite[Exercise~2.15]{IllNad1999}); that $\Fc_{\infty}(X) \equiv (\Fc(X),d_{\infty})$ is complete (see \cite{KlePuRa1986}); and that, even though $\Fc_{0}(X) \equiv (\Fc(X),d_0)$ is not necessarily complete (see \cite[Theorem~3.4]{JoKim2000}), by the result \cite[Theorem~3.8]{JoKim2000} (and its generalization to arbitrary metric spaces \cite[Proposition~4.6]{Huang2022}) there exists an equivalent metric $d_{1}$ (i.e.\ the topology induced by $d_{0}$ and $d_{1}$ coincide) for which $(\Fc(X),d_{1})$ is a complete metric space (see \cite[Theorem~3.9]{JoKim2000}). The result is now a direct consequence of the fact that point-recurrence (resp.\ $\AP$-recurrence) implies topological recurrence (resp.\ multiple recurrence), together with Corollary~\ref{Cor:F.rec} and the completeness of the spaces involved.
\end{proof}

\begin{remark}\label{Rem:point.rec}
	The implication (i) $\Rightarrow$ (ii) of Corollary~\ref{Cor:point.rec} holds when $X$ is a (not necessarily metrizable) topological space, but also (i) $\Rightarrow$ (iii) $\Rightarrow$ (iv) $\Rightarrow$ (ii) are true when $X$ is a (not necessarily complete) metric space. Indeed, the needed arguments are the following:
		\begin{enumerate}[--]
			\item (i) $\Rightarrow$ (ii): Given a basic Vietoris-open set $\Vc(U_1,...,U_N)$, where $U_1,...,U_N$ are $N \in \NN$ arbitrary but fixed non-empty open subsets of $X$, then we can choose a point
			\[
			(x_1,...,x_N) \in U_1\times\cdots\times U_N
			\]
			such that $(x_1,...,x_N) \in \Rec(f_{(N)})$, resp.\ $(x_1,...,x_N) \in \AP\Rec(f_{(N)})$. Considering the compact set $K:=\{x_1,...,x_N\}$ we have that $K \in \Vc(U_1,...,U_N)$ and $K \in \Rec(\com{f})$, resp.\ $K \in \AP\Rec(\com{f})$.
			
			\item (i) $\Rightarrow$ (iii): Given any $u \in \Fc(X)$ and $\eps>0$ we can use Lemma~\ref{Lem:eps.pisos} to obtain positive numbers $0 = \alpha_0 < \alpha_1 < \alpha_2 < ... < \alpha_N = 1$ such that $d_H(u_{\alpha}, u_{\alpha_{i+1}})<\tfrac{\eps}{2}$ for each $\alpha \in \ ]\alpha_i, \alpha_{i+1}]$ with $1\leq i\leq N-1$. Since the topology induced by the Hausdorff metric is equivalent to the Vietoris topology, we can now find non-empty open sets $U_1^i,...,U_{k_i}^i \subset X$ for each $1\leq i\leq N$ such that
			\[
			\Vc(U_1^i,...,U_{k_i}^i) \subset \Bc_H(\alpha_i,\tfrac{\eps}{2}) \quad \text{ for every } 1\leq i\leq N,
			\]
			and by hypothesis we can choose a point
			\[
			(x_1^1,...,x_{k_1}^1,...,x_1^N,...,x_{k_N}^N) \in U_1^1\times\cdots\times U_{k_1}^1\times\cdots\times U_1^N\times\cdots\times U_{k_N}^N
			\]
			with $(x_1^1,...,x_{k_1}^1,...,x_1^N,...,x_{k_N}^N) \in \Rec(f_{(M)})$, resp.\ $(x_1^1,...,x_{k_1}^1,...,x_1^N,...,x_{k_N}^N) \in \AP\Rec(f_{(M)})$, for $M=\sum_{i=1}^N k_i$. Considering now the compact sets $K_i := \{ x_1^i,...,x_{k_i}^i \}$ for each $1\leq i\leq N$, one can check that the fuzzy set $v:=\max_{1\leq i\leq N} (\alpha_i \cdot \chi_{K_i})$ belongs to $\Bc_{\infty}(u,\eps)$ arguing as in Theorem~\ref{The:F.rec}, but also that $v \in \Rec(\fuz{f})$, resp.\ $v \in \AP\Rec(\fuz{f})$, for the metric $d_{\infty}$.
			
			\item (iii) $\Rightarrow$ (iv): Follows from the fact that $d_{0}(u,v) \leq d_{\infty}(u,v)$ for every $u,v \in \Fc(X)$.
			
			\item (iv) $\Rightarrow$ (ii): Given any $K \in \Kc(X)$ and $\eps>0$ we can choose a fuzzy set $u \in \Bc_{0}(\chi_K,\eps)$ such that $u \in \Rec(\fuz{f})$, resp.\ $u \in \AP\Rec(\fuz{f})$, for the metric $d_0$. Considering the set $L := u_1$, and given any non-negative integer $n \in \NN_0$ such that $\fuz{f}^n(u) \in B_{0}(\chi_K,\eps)$, we have that
			\[
			d_H\left(K,\com{f}^n(L)\right) = d_H\left(K,f(u_1)\right) = d_H\left(K,\left[\fuz{f}^n(u)\right]_1\right) \leq d_{\infty}\left(\chi_K,\fuz{f}^n(u)\right) = d_{0}\left(\chi_K,\fuz{f}^n(u)\right) < \eps.
			\]
			We can deduce that $L \in \Bc_H(K,\eps)$ and also that $L \in \Rec(\com{f})$, resp.\ $L \in \AP\Rec(\com{f})$.
		\end{enumerate}
		We do not know if statement (ii) implies any of the other statements of Corollary~\ref{Cor:point.rec}, or if any of the implications not mentioned in this remark holds when $(X,d)$ is not a complete metric space.
\end{remark}

As in Remark~\ref{Rem:K.rec} and at the end of Section~\ref{Sec_4:F(X)}, the conclusion of Corollary~\ref{Cor:point.rec} is not true for the notion of {\em point-recurrence} (neither for {\em $\AP$-recurrence}) if we do not assume for the map $f$ the $N$-fold direct product hypothesis for every $N \in \NN$. Indeed, the already mentioned dynamical systems constructed in \cite[Theorem~3.2]{GriLoPe2025_AMP} are again a counterexample in this case. However, if apart from the completeness of $(X,d)$ we also assume separability, then we can delete the $N$-fold direct product hypothesis by using the notion of quasi-rigidity recently introduced in \cite{GriLoPe2025_AMP}:
\begin{enumerate}[--]
	\item a dynamical system $(X,f)$ is called {\em quasi-rigid} if there exist an increasing sequence of positive integers $(n_k)_{k\in\NN}$ and a dense set $Y \subset X$ such that $f^{n_k}(x) \to x$, as $k\to\infty$, for every $x \in Y$.
\end{enumerate}

The notion of quasi-rigidity has been used in many concrete recurrence problems (see \cite{GriLoPe2025_AMP,Lopez2024_IMRN,LoMe2025_JMAA}), and it is not hard to check that every {\em quasi-rigid} system $(X,f)$ fulfills that $(X^N,f_{(N)})$ is {\em point-recurrent} for every $N \in \NN$. The converse holds when~$X$ is a separable complete metric (also called {\em Polish}) space by the {\em Baire category theorem} (see \cite[Theorem~2.5]{GriLoPe2025_AMP}), and we can obtain the next:

\begin{corollary}\label{Cor:q-r}
	Let $f:X\longrightarrow X$ be a continuous map on a separable complete metric space $(X,d)$. Then, the following statements are equivalent:
	\begin{enumerate}[{\em(i)}]
		\item $f:X\longrightarrow X$ is quasi-rigid;
		
		\item $\com{f}:\Kc(X)\longrightarrow \Kc(X)$ is quasi-rigid;
			
		\item $\fuz{f}:\Fc_{\infty}(X)\longrightarrow \Fc_{\infty}(X)$ is quasi-rigid;
		
		\item $\fuz{f}:\Fc_{0}(X)\longrightarrow \Fc_{0}(X)$ is quasi-rigid.
	\end{enumerate}
\end{corollary}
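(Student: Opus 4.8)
The plan is to prove the cycle of implications
\[
\text{(i)} \Rightarrow \text{(iii)} \Rightarrow \text{(iv)} \Rightarrow \text{(ii)} \Rightarrow \text{(i)},
\]
combining two different kinds of argument. The three implications that \emph{build} quasi-rigidity in a larger space (or pass between the two fuzzy metrics) I would handle by hand, exploiting that quasi-rigidity provides a \emph{single} sequence $(n_k)_{k\in\NN}$ together with a \emph{single} dense set witnessing the recurrence; these arguments need neither completeness nor separability. The closing implication (ii)$\Rightarrow$(i) is where the Polish hypothesis is genuinely used: I would route it through point-recurrence, Corollary~\ref{Cor:point.rec} and the Baire-category characterization of quasi-rigidity recalled just before the statement.

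For (i)$\Rightarrow$(iii), fix a sequence $(n_k)$ and a dense set $Y\subset X$ with $f^{n_k}(y)\to y$ for every $y\in Y$. Mimicking Theorem~\ref{The:F.rec} and Remark~\ref{Rem:point.rec}, let $\mathcal{U}$ be the family of simple fuzzy sets $v=\max_{1\leq i\leq N}(\alpha_i\cdot\chi_{K_i})$ with $0<\alpha_1<\cdots<\alpha_N=1$ and each $K_i\subset Y$ a finite nonempty set. Density of $\mathcal{U}$ in $\Fc_\infty(X)$ follows exactly as in Theorem~\ref{The:F.rec}: given $u\in\Fc(X)$ and $\eps>0$, Lemma~\ref{Lem:eps.pisos} produces levels $0=\alpha_0<\cdots<\alpha_N=1$, each compact level set $u_{\alpha_i}$ is approximated within $\tfrac{\eps}{2}$ by a finite $K_i\subset Y$ (density of $Y$ plus compactness), and Proposition~\ref{Pro:Hausdorff} together with the nesting $u_{\alpha_i}=\bigcup_{l\geq i}u_{\alpha_l}$ yields $d_\infty(u,v)\leq\eps$. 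For the recurrence, such a $v$ has only the finitely many distinct upper level sets $v_{\alpha_i}=\bigcup_{l\geq i}K_l$, each a finite subset of $Y$; hence
\[
d_\infty\!\left(v,\fuz{f}^{\,n_k}(v)\right)=\max_{1\leq i\leq N} d_H\!\left(v_{\alpha_i},f^{n_k}(v_{\alpha_i})\right)\longrightarrow 0,
\]
since $f^{n_k}(y)\to y$ for each of the finitely many points $y$ involved. Thus the \emph{same} sequence $(n_k)$ witnesses $v\in\Rec(\fuz{f})$ for all $v\in\mathcal{U}$, so $\fuz{f}$ is quasi-rigid on $\Fc_\infty(X)$.

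The implication (iii)$\Rightarrow$(iv) is immediate: since $d_0\leq d_\infty$, the same sequence and the same (now $d_0$-dense) set keep working. For (iv)$\Rightarrow$(ii) I would pass to top level sets. Given $(n_k)$ and a $d_0$-dense set $\mathcal{U}$ witnessing quasi-rigidity of $\fuz{f}$ on $\Fc_0(X)$, set $\mathcal{L}:=\{u_1 : u\in\mathcal{U}\}\subset\Kc(X)$. The key elementary observation is that $d_H(u_1,v_1)\leq d_0(u,v)$ for all $u,v\in\Fc(X)$: every admissible reparametrization $\xi\in\Tc$ fixes the endpoint $\alpha=1$, so $(\xi\circ v)_1=v_1$ and hence $d_H(u_1,v_1)\leq d_\infty(u,\xi\circ v)$, after which one takes the infimum. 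Using $[\fuz{f}^{\,n_k}(u)]_1=f^{n_k}(u_1)=\com{f}^{n_k}(u_1)$ from Proposition~\ref{Pro:Zadeh}, this inequality gives simultaneously that $\mathcal{L}$ is dense in $\Kc(X)$ (approximate any $K$ by some $u$ with $d_0(\chi_K,u)$ small, so $d_H(K,u_1)\leq d_0(\chi_K,u)$ is small) and that $\com{f}^{n_k}(u_1)\to u_1$ for every $u\in\mathcal{U}$; hence $\com{f}$ is quasi-rigid.

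I expect the closing implication (ii)$\Rightarrow$(i) to be the main obstacle: one cannot extract point-recurrence of $f$ from the Hausdorff-recurrence of $\com{f}$ by an elementary selection of points, so the Polish hypothesis must intervene. Here the plan is: a quasi-rigid system is in particular point-recurrent (its witnessing dense set consists of recurrent points), so (ii) yields that $\com{f}$ is point-recurrent; Corollary~\ref{Cor:point.rec}(a) then gives that $f_{(N)}$ is point-recurrent for every $N\in\NN$; and finally the cited Baire-category characterization of quasi-rigidity on Polish spaces (\cite[Theorem~2.5]{GriLoPe2025_AMP}, whose converse direction is exactly the one requiring separable completeness) converts this back into quasi-rigidity of $f$, closing the cycle. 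This last step should be the only place where separability is essential, which is consistent with the fact that $\Fc_\infty(X)$ is non-separable yet still appears as an equivalent condition.
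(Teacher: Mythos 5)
Your proof is correct, and it organizes the argument along a genuinely different decomposition than the paper's. The paper obtains the equivalences (i) $\Leftrightarrow$ (ii) $\Leftrightarrow$ (iv) wholesale from Baire category: since $\Kc(X)$ and $\Fc_{0}(X)$ are again separable and completely metrizable, the Polish-space characterization of quasi-rigidity from \cite[Theorem~2.5]{GriLoPe2025_AMP} is applied inside each of the spaces $X$, $\Kc(X)$, $\Fc_{0}(X)$ and transferred between them via Theorem~\ref{The:K.rec} and Corollary~\ref{Cor:F.rec}; its only construction is (ii) $\Rightarrow$ (iii) (forced, since $\Fc_{\infty}(X)$ is non-separable), plus the trivial (iii) $\Rightarrow$ (iv). You instead close the cycle (i) $\Rightarrow$ (iii) $\Rightarrow$ (iv) $\Rightarrow$ (ii) $\Rightarrow$ (i), doing three implications by explicit dense families transporting a single rigidity sequence --- arguments valid on arbitrary metric spaces, in the spirit of Remark~\ref{Rem:q-r} --- and you invoke the Polish hypothesis exactly once, in (ii) $\Rightarrow$ (i), through point-recurrence, Corollary~\ref{Cor:point.rec}(a) and \cite[Theorem~2.5]{GriLoPe2025_AMP}. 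Your packaging buys three things. First, it pinpoints the unique place where separable completeness is indispensable, which matches the paper's own Remark~\ref{Rem:q-r}. Second, it sidesteps a tersness in the paper's three-line argument for (i) $\Leftrightarrow$ (iv): applying the Polish characterization inside $\Fc_{0}(X)$ requires topological recurrence of the $N$-fold products $\fuz{f}_{(N)}$, which Corollary~\ref{Cor:F.rec} does not literally provide (it treats only $\fuz{f}$ itself, unlike Theorem~\ref{The:K.rec}, which does cover products of $\com{f}$); your constructive route produces quasi-rigidity of $\fuz{f}$ directly, so recurrence of its products comes for free. Third, your inequality $d_H(u_1,v_1) \leq d_0(u,v)$, derived from the fact that every $\xi \in \Tc$ fixes $\alpha = 1$, makes explicit exactly the point that Remark~\ref{Rem:q-r} leaves implicit in its (iv) $\Rightarrow$ (ii) step: there Proposition~\ref{Pro:Zadeh}(d) alone does not suffice, because $\fuz{f}^{n_k}(v)$ need not be a characteristic function. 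What the paper's route buys in exchange is economy: it recycles the already-established recurrence equivalences and needs essentially no new estimates beyond the single construction (ii) $\Rightarrow$ (iii).
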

Note that $\Fc_{\infty}(X) \equiv (\Fc(X),d_{\infty})$ is non-separable as soon as $X$ has more than one point. Hence, and as we are about to show, the only non-trivial equivalent condition in Corollary~\ref{Cor:q-r} is (iii).
\begin{proof}[Proof of Corollary~\ref{Cor:q-r}]
	If $(X,d)$ is separable then we have: that $(\Kc(X),d_H)$ is separable (see for instance \cite[Exercise~1.18]{IllNad1999}); and that $\Fc_{0}(X) \equiv (\Fc(X),d_0)$ is separable (see \cite[Theorem~4.12]{JarSanSan2020a}). Since these spaces are also completely metrizable (see the proof of Corollary~\ref{Cor:point.rec} for details) the equivalences (i) $\Leftrightarrow$ (ii) $\Leftrightarrow$ (iv) follow from the fact that quasi-rigidity implies topological recurrence for every $N$-fold direct product, together with Corollary~\ref{Cor:F.rec}, separability and completeness.
	
	(ii) $\Rightarrow$ (iii): We are assuming that $(\Kc(X),\com{f})$ is quasi-rigid so we can consider $(n_k)_{k\in\NN} \in \NN^{\NN}$ and a dense set $\Yc \subset \Kc(X)$ fulfilling that $\com{f}^{n_k}(K) \to K$, as $k\to\infty$, for every $K \in \Yc$. Now, given $u \in \Fc(X)$ and $\eps>0$ we can use Lemma~\ref{Lem:eps.pisos} to obtain positive numbers $0 = \alpha_0 < \alpha_1 < \alpha_2 < ... < \alpha_N = 1$ such that $d_H(u_{\alpha}, u_{\alpha_{i+1}})<\tfrac{\eps}{2}$ for each $\alpha \in \ ]\alpha_i, \alpha_{i+1}]$ with $1\leq i\leq N-1$, and then we can select compact sets $K_i \in \Bc_H(u_{\alpha_i},\tfrac{\eps}{2}) \cap \Yc$. Hence, the fuzzy set $v:=\max_{1\leq i\leq N} (\alpha_i \cdot \chi_{K_i})$ belongs to $\Bc_{\infty}(u,\eps)$ arguing as in Theorem~\ref{The:F.rec}, and it is not hard to check that $\fuz{f}^{n_k}(v) \to v$, as $k\to\infty$, for the metric $d_{\infty}$.
	
	(iii) $\Rightarrow$ (iv): Trivial since the topology induced by $d_{\infty}$ in $\Fc(X)$ is finer than that induced by $d_{0}$.
\end{proof}

\begin{remark}\label{Rem:q-r}
	The implication (i) $\Rightarrow$ (ii) of Corollary~\ref{Cor:q-r} holds when~$X$ is a (not necessarily metrizable) topological space, but also the \textbf{equivalences} (ii) $\Leftrightarrow$ (iii) $\Leftrightarrow$ (iv) are true when the underlying space~$X$ is a (not necessarily separable, neither complete) metric space. The needed arguments are very similar to those exposed in Remark~\ref{Rem:point.rec}:
	\begin{enumerate}[--]
		\item (i) $\Rightarrow$ (ii): By assumption there exist a sequence $(n_k)_{k\in\NN} \in \NN^{\NN}$ and a dense set $Y \subset X$ such that $f^{n_k}(x)\to x$, as $k\to\infty$, for every $x \in Y$. Then given a basic Vietoris-open set $\Vc(U_1,...,U_N)$, where $U_1,...,U_N$ are $N \in \NN$ non-empty open subsets of $X$, we can choose points $x_j \in U_j \cap Y$ for $1\leq j\leq N$. Thus, the set $K:=\{x_1,...,x_N\}$ belongs to $\Vc(U_1,...,U_N)$ and fulfills that $\com{f}^{n_k}(K)\to K$ as $k\to\infty$.
			
		\item Note that the proofs given in Corollary~\ref{Cor:q-r} for (ii) $\Rightarrow$ (iii) $\Rightarrow$ (iv) are valid on every metric space.
		
		\item (iv) $\Rightarrow$ (ii): By assumption there exist a sequence $(n_k)_{k\in\NN} \in \NN^{\NN}$ and a $d_{0}$-dense set $\Zc \subset \Fc(X)$ such that $\fuz{f}^{n_k}(u) \to u$, for the metric $d_{0}$ and as $k\to\infty$, for every $u \in \Zc$. Then given any $K \in \Kc(X)$ and any $\eps>0$ we can choose a fuzzy set $v \in \Bc_{0}(\chi_K,\eps) \cap \Zc$. Considering the set $L := v_1$, and arguing as in Remark~\ref{Rem:point.rec}, one can check that $L \in \Bc_H(K,\eps)$ and also that $\com{f}^{n_k}(L) \to L$ as $k\to\infty$.
	\end{enumerate}
	We do not know if statements (ii), (iii) and (iv) imply statement (i) of Corollary~\ref{Cor:q-r} when $(X,d)$ is not a separable complete metric space.
\end{remark}

\section{Conclusions and open problems}\label{Sec_6:conclusions}

We have examined the interaction between various individual recurrence-kind properties presented by a discrete dynamical system $(X,f)$ and the respective collective dynamics of its extensions $(\Kc(X),\com{f})$ and $(\Fc(X),\fuz{f})$. In particular, we have proved that the {\em topological $(\ell,\Ac)$-recurrence} for every $(X^N,f_{(N)})$ is equivalent to the fact that $(\Kc(X),\com{f})$ has this same property when $X$ is any topological space, but also equivalent to the {\em topological $(\ell,\Ac)$-recurrence} of $(\Fc(X),\fuz{f})$ with respect to the metrics $d_{\infty}$~and~$d_{0}$ when~$(X,d)$ is a metric space. As a consequence, the same results hold: for the particular notions of {\em topological recurrence}, {\em multiple recurrence} and {\em topological $\Ac$-recurrence} in general; but also for the stronger notions of {\em point-recurrence} and {\em point-$\AP$-recurrence} (also {\em quasi-rigidity}) when $X$ is a complete (and also separable) metric space. Now, we would like to close the paper by mentioning some open problems and possible future lines of research.

First of all, and following Section~\ref{Sec_5:complete}, we would like to highlight in a proper way the left implications mentioned in Remarks~\ref{Rem:point.rec}~and~\ref{Rem:q-r}. Indeed, we have proved there that some of the implications and equivalences stated in Corollaries~\ref{Cor:point.rec} and \ref{Cor:q-r} still hold when the underlying space is not complete. Answering the next open question would extend these results to a much more general class of spaces:

\begin{question}
	Let $X$ be a metric (or even topological) space and assume that $(X,f)$ is a dynamical system for which $\com{f}:\Kc(X)\longrightarrow\Kc(X)$ is point-recurrent ($\AP$-recurrent or quasi-rigid). Does it follow that $f_{(N)}:X^N\longrightarrow X^N$ is point-recurrent ($\AP$-recurrent or quasi-rigid) for every $N \in \NN$?
\end{question}

As a second possible future line of work, and having in mind the works \cite{BerPeRo2017} and \cite{WuXueJi2012} about collective dynamics in the context of Linear Dynamics, one could consider the following situation:
\begin{enumerate}[--]
	\item Given a continuous linear operator $f:X\longrightarrow X$ acting on a topological vector space $X$, let~$\Cc(X)$ be the {\em hyperspace of non-empty compact and convex subsets} of such a space $X$. It is well-known that the hyperextension map $\com{f}:\Kc(X)\longrightarrow \Kc(X)$ leaves the set $\Cc(X)$ invariant by the continuity and linearity of $f$. Moreover, considering $\Cc(X)$ as a topological subspace of $\Kc(X)$ with the restricted Vietoris topology, one can study the dynamics of the restriction
	\[
	\com{f}\vert_{\Cc(X)} := \con{f}:\Cc(X)\longrightarrow\Cc(X),
	\]
	where $\con{f}(K) := \com{f}(K) = f(K) = \{ fx \ ; \ x \in K \}$ for every set $K \in \Cc(X)$.
\end{enumerate}
The following natural open question arises (see \cite{BerPeRo2017} for more details about the setting considered):

\begin{question}
	Let $\ell \in \NN$ be a positive integer, $\Ac \subset \Part(\NN_0)$ a Furstenberg family, and $f:X\longrightarrow X$ a continuous linear operator acting on a complete locally convex topological vector space $X$. Is the system $\com{f}:\Kc(X)\longrightarrow \Kc(X)$ topologically $(\ell,\Ac)$-recurrent if and only if so is $\con{f}:\Cc(X)\longrightarrow \Cc(X)$?
\end{question}

As a third possible future line of research we must mention the role of invariant measures. Indeed, in 1975, Bauer and Sigmund already studied the existence of invariant measures for the extended system~$(\Kc(X),\com{f})$, showing that it admits an $\com{f}$-invariant measure if and only if the system $(X,f)$ admits an $f$-invariant measure (see \cite[Proposition~5]{BaSig1975}). As far as we know, this topic has not been considered in the context of the fuzzy extension $(\Fc(X),\fuz{f})$, and since the existence of invariant measure has been recently related to some strong point-recurrence properties (see \cite{GriLo2023} and \cite{Lopez2024_RinM}), we consider interesting to mention it here: in \cite{GriLo2023} it was proved that the existence of an invariant measure with full support, for a dynamical system $(X,f)$ where $X$ is a second-countable space, implies that the system~$(X,f)$ is {\em frequently recurrent} (a point-recurrence property stronger than $\AP$-recurrence). Hence, the following question is then natural (see \cite{GriLo2023,Lopez2024_RinM} for more details about the role of invariant measures in recurrence):

\begin{question}
	Let $(X,f)$ be a dynamical system acting on a metric space $(X,d)$. Under which conditions do the extended systems $(\Fc_{\infty}(X),\fuz{f})$ and/or $(\Fc_{0}(X),\fuz{f})$ admit an $\fuz{f}$-invariant measure?
\end{question}

Finally, we provide a detailed explanation of the topologies chosen for the hyperspaces used in this paper $\Kc(X)$ and $\Fc(X)$, since changing the topologies to check if the results obtained still hold could potentially be another future line of research:
\begin{enumerate}
	\item[--] For $\Kc(X)$, the hyperspace of non-empty compact subsets of a topological space $X$, we adopted the Vietoris topology, in line with previous works \cite{BaSig1975,BerPeRo2017,Peris2005,WuXueJi2012}. This choice is standard in the literature for studying the dynamical properties of $(\Kc(X), \com{f})$.
	
	\item[--] For $\Fc(X)$, the space of normal fuzzy sets of $X$ with compact support, we employed the metrics $d_{\infty}$ and $d_{0}$, as seen in references such as \cite{CaKup2017,ChaRo2008,JarSan2021_FS,JarSanSan2020a,JarSanSan2020b,JoKim2000,KimChenJu2017,Kup2011,LiWaZha2006,MarPeRo2021,RiKimJu2023,WuDingLuWang2017}. These metrics slightly rely on the assumption that the fuzzy sets have compact support (though some works further restrict the support to be \textbf{compact and convex}, particularly in linear cases). While some of these references consider additional metrics beyond $d_{\infty}$ and $d_{0}$, they all assume that $X$ is a metric space, and therefore so is $\Fc(X)$. Notably, even non-dynamical studies on fuzzy sets, such as \cite{Huang2022}, exclusively consider metrics and do not explore non-metrizable topologies for different spaces of fuzzy sets.
\end{enumerate}

These observations arise from a significant point raised by one of the anonymous reviewer of this paper: in \cite{WangWei2012}, the authors did not rely on any metric to study the dynamics of $(\Fc(X), \fuz{f})$. Instead, the hyperspaces $\Kc(X)$ and $\Fc(X)$ were equipped with the so-called {\em hit-or-miss topology}, which was used to analyze dynamical properties such as Li-Yorke chaos, $\omega$-chaos, and distributional chaos for the systems $(\Kc(X), \com{f})$ and $(\Fc(X), \fuz{f})$. Importantly, the authors of \cite{WangWei2012} acknowledged that this topology has not been widely applied in fuzzy dynamics, and their work appears largely independent of the foundational studies that we have referenced for this paper.

In light of the previous fact, we propose a potential and final avenue for future research: exploring the various fuzzy dynamical properties of the system $(\Fc(X), \fuz{f})$ and existing in the literature under other metrics from those considered in the references cited in this paper, but also with respect to the hit-or-miss topology (see again \cite{WangWei2012}). We find very remarkable that this last topology does not require restricting $\Fc(X)$ to fuzzy sets with compact support, fact that could potentially leading to some brand new or completely different results.

\section*{Funding}

The second and third authors were supported by MCIN/AEI/10.13039/501100011033/FEDER, UE, Project PID2022-139449NB-I00. The second author was also partially supported by the Spanish Ministerio de Ciencia, Innovaci\'on y Universidades, grant FPU2019/04094. The third author was also partially supported by Generalitat Valenciana, Project PROMETEU/2021/070.

\section*{Acknowledgments}

The authors would like to thank the anonymous reviewers, whose careful comments have significantly improved not only the presentation of this paper but also the selection of references included.

{\footnotesize

}

{\footnotesize
$\ $\\

\textsc{Illych Alvarez}: Escuela Superior Polit\'ecnica del Litoral, Facultad de Ciencias Naturales y Matem\'aticas, Km.\ 30.5 V\'ia Perimetral, Guayaquil, Ecuador. e-mail: ialvarez@espol.edu.ec

\textsc{Antoni L\'opez-Mart\'inez}: Universitat Polit\`ecnica de Val\`encia, Institut Universitari de Matem\`atica Pura i Aplicada, Edifici 8E, 4a planta, 46022 Val\`encia, Spain. e-mail: alopezmartinez@mat.upv.es

\textsc{Alfred Peris}: Universitat Polit\`ecnica de Val\`encia, Institut Universitari de Matem\`atica Pura i Aplicada, Edifici 8E, 4a planta, 46022 Val\`encia, Spain. e-mail: aperis@mat.upv.es
}

\end{document}